\newtheorem{theorem}{Theorem}[section]
\newtheorem{corollary}[theorem]{Corollary}
\newtheorem{remark}[theorem]{Remark}
\newtheorem{lemma}[theorem]{Lemma}
\newtheorem{proposition}[theorem]{Proposition}
\numberwithin{equation}{section}
\def\r{\mathbb{R}}
\def\rn{\mathbb{R}^N}
\def\eps{\varepsilon}
\def\R{\mathbb{R}}
\def\S{\mathbb{S}}
\def\cC{\mathcal{C}}
\author{Mónica Clapp\footnote{M. Clapp was partially supported by UNAM-DGAPA-PAPIIT grant IN100718 (Mexico), CONACYT grant A1-S-10457 (Mexico).},\quad Alberto Saldaña\footnote{A. Saldaña was partially supported by the Alexander von Humboldt Foundation.},\quad and \quad Andrzej Szulkin}
\title{Phase separation, optimal partitions, and nodal solutions to the Yamabe equation on the sphere}
\date{\today}
\begin{document}
\maketitle

\begin{abstract}
We study an optimal $M$-partition problem for the Yamabe equation on the round sphere, in the presence of some particular symmetries. We show that there is a correspondence between solutions to this problem and least energy sign-changing symmetric solutions to the Yamabe equation on the sphere with precisely $M$ nodal domains.

The existence of an optimal partition is established through the study of the limit profiles of least energy solutions to a weakly coupled competitive elliptic system on the sphere.\medskip

\noindent\textsc{Keywords:} Yamabe equation on the sphere; optimal partition; $M$-nodal solution; weakly coupled competitive elliptic system.\medskip

\noindent\textsc{MSC2010: }58J05 · 58J32 · 35J50 · 35B06 · 35B08 · 35B33.
\end{abstract}

\section{Introduction and statement of results}

We study an optimal partition problem for the Yamabe equation
\begin{equation} \label{eq:1}
\mathscr{L}_gu:=-\Delta_g u + a_Nu = |u|^{2^*-2}u\qquad\text{on }\mathbb{S}^N,
\end{equation}
on the round $N$-sphere $(\mathbb{S}^N,g)$, $N\geq 3$, where $\Delta_g:=\mathrm{div}_g\nabla_g$ is the Laplace-Beltrami operator, $a_N:=\frac{N(N-2)}{4}$, and $2^*:=\frac{2N}{N-2}$ is the critical Sobolev exponent. 
More precisely, for each pair of integers $m,n\geq 2$ with $m+n=N+1$ and every $[O(m)\times O(n)]$-invariant open subset $U$ of $\mathbb{S}^N$, we consider the problem
\begin{equation} \label{eq:3}
\begin{cases}
\mathscr{L}_gu = |u|^{2^*-2}u &\text{ in }U,\\
u=0 &\text{ on }\partial U,\\
u\text{ is }[O(m)\times O(n)]\text{-invariant},
\end{cases}
\end{equation}
and denote by $c^{(m,n)}_U$ the least energy of a nontrivial solution to \eqref{eq:3}, i.e.,
$$c^{(m,n)}_U:=\inf\left\{\frac{1}{N}\int_U|u|^{2^*}:u\neq0,\;u\text{ solves }\eqref{eq:3}\right\}.$$
Given $M\geq 2$, we look for a solution to the optimal $M$-partition problem
\begin{equation} \label{eq:4}
\inf_{\{U_1,\ldots,U_M\}\in\mathscr{P}_M^{(m,n)}}\;\sum_{i=1}^M c^{(m,n)}_{U_i},
\end{equation}
on $\mathbb{S}^N$, where
\begin{align*}
\mathscr{P}_M^{(m,n)}:=\{\{U_1,\ldots,U_M\}:\;& U_i\neq\emptyset \text{ is }[O(m)\times O(n)]\text{-invariant and open in }\mathbb{S}^N\\
&\text{ and }U_i\cap U_j=\emptyset\text{ if }i\neq j,\quad \forall i,j=1,\ldots,M \}.
\end{align*}
A solution to \eqref{eq:4} is a set $\{U_1,\ldots,U_M\}\in\mathscr{P}_M^{(m,n)}$ such that
$$\sum_{i=1}^M c^{(m,n)}_{U_i}=\inf_{\{V_1,\ldots,V_M\}\in\mathscr{P}_M^{(m,n)}}\;\sum_{i=1}^M c^{(m,n)}_{V_i}.$$
An outstanding feature of this optimal partition problem is that any solution of \eqref{eq:4} is the set of nodal domains of an $[O(m)\times O(n)]$-invariant sign-changing solution to the Yamabe problem \eqref{eq:1}, which has minimal energy among all $[O(m)\times O(n)]$-invariant $M$-nodal solutions to \eqref{eq:1}. This fact is proved in Theorem \ref{thm:partition} below.

In order to establish the existence of a solution to the problem \eqref{eq:4}, we consider the competitive elliptic system
\begin{equation} \label{eq:2}
\begin{cases}
\mathscr{L}_g u_i = |u_i|^{2^*-2}u_i + \sum\limits_{j\neq i} \lambda_{ij}\beta_{ij}|u_j|^{\alpha_{ij}}|u_i|^{\beta_{ij}-2}u_i\qquad\text{on }\mathbb{S}^N, \\
u_i\text{ is }[O(m)\times O(n)]\text{-invariant},\qquad i,j=1,\ldots,M,
\end{cases}
\end{equation}
where $\lambda_{ij}=\lambda_{ji}<0$, $\alpha_{ij},\beta_{ij}>1$, $\alpha_{ij}=\beta_{ji}$, and $\alpha_{ij}+\beta_{ij}=2^*$.

The existence of a positive least energy fully nontrivial solution to this system was recently shown in \cite{cs}. Fully nontrivial means that every component $u_i$ is nontrivial. Here, we show that this system exhibits phase separation as $\lambda_{ij}\to-\infty$ and that this phenomenon gives rise to a solution to \eqref{eq:4} and to an $M$-nodal solution of the Yamabe problem \eqref{eq:1}. The precise statement is given by the following theorem.

We write $\mathbb{B}^d$ and $\mathbb{S}^{d-1}$ for the open unit ball and the unit sphere in $\mathbb{R}^d$, respectively. The symbol ``$\cong$" stands for ``is $[O(m)\times O(n)]$-diffeomorphic to".

\begin{theorem} \label{thm:main}
Let $m,n\geq 2$ with $m+n=N+1$ and, for each $i,j=1,\ldots,M$, $i\ne j$, let $(\lambda_{ij,k})$ be a sequence of negative numbers such that $\lambda_{ij,k}\to -\infty$ as $k\to\infty$. Let $u_{k}=(u_{k,1},\ldots,u_{k,M})$ be a positive least energy fully nontrivial $[O(m)\times O(n)]$-invariant solution to the system \eqref{eq:2} with $\lambda_{ij}=\lambda_{ij,k}$. Then, after passing to a subsequence, we have that
\begin{itemize}
\item[$(a)$]$u_{k,i}\to u_{\infty,i}$ strongly in $H^1_g(\mathbb{S}^N)$, $u_{\infty,i}\geq 0$, $u_{\infty,i}$ is continuous on $\mathbb{S}^N$ and $u_{\infty,i}|_{U_i}$ is a least energy solution to problem \eqref{eq:3} in $U_i:=\{x\in\mathbb{S}^{N}:u_{\infty,i}(x)>0\}$, for each $i=1,\ldots,M$.
\item[$(b)$]$\{U_1,\ldots,U_M\}\in \mathscr{P}_M^{(m,n)}$ and it solves the optimal $M$-partition problem \eqref{eq:4} on $\mathbb{S}^N$.
\item[$(c)$]$U_1,\ldots,U_M$ are smooth and connected, $\overline{U_1\cup\cdots\cup U_M}=\mathbb{S}^{N}$ and, after relabeling, we have that
\begin{itemize}
\item[•] $U_1\cong\mathbb{S}^{m-1}\times \mathbb{B}^{n}$,\quad $U_i\cong\mathbb{S}^{m-1}\times\mathbb{S}^{n-1}\times(0,1)$ if  $i=2,\ldots,M-1$, and\quad $U_M\cong\mathbb{B}^m\times \mathbb{S}^{n-1}$,
\item[•] $\overline{U}_i\cap \overline{U}_{i+1}\cong\mathbb{S}^{m-1}\times\mathbb{S}^{n-1}$ and\quad $\overline{U}_i\cap \overline{U}_j=\emptyset$\, if\, $|j-i|\geq 2$,
\item[•] the function
$$u:=\sum_{i=1}^M(-1)^{i-1}u_{\infty,i}$$
is an $[O(m)\times O(n)]$-invariant sign-changing solution to the Yamabe problem \eqref{eq:1} with precisely $M$ nodal domains and $u$ has least energy among all such solutions.
\end{itemize}
\end{itemize}
\end{theorem}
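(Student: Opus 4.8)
The plan is to realize the optimal partition and the nodal solution as the strong $H^1_g$-limit of the segregating system, using the symmetry to recover compactness at the critical exponent. First I would establish a uniform energy bound: writing $c_{\lambda_k}$ for the least energy of the fully nontrivial solution $u_k$, one shows that $c_{\lambda_k}$ is nondecreasing as each $|\lambda_{ij,k}|$ grows and is bounded above by the optimal partition value $\inf_{\mathscr{P}_M^{(m,n)}}\sum_i c^{(m,n)}_{U_i}$ (test with least energy solutions on a near-optimal, symmetric competitor partition, glued with disjoint supports so the coupling terms drop out). Since the coupling contributes nonpositively to the energy, this yields a uniform $H^1_g$-bound on each $u_{k,i}$ together with the quantitative estimate $|\lambda_{ij,k}|\int_{\mathbb{S}^N}|u_{k,i}|^{\beta_{ij}}|u_{k,j}|^{\alpha_{ij}}\le C$; as $\lambda_{ij,k}\to-\infty$ this forces the interaction integrals to vanish.

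The decisive structural point is that the $[O(m)\times O(n)]$-symmetry restores compactness: because every orbit has dimension at least $\min(m,n)-1\ge 1$, the symmetric Sobolev embedding of the invariant subspace of $H^1_g(\mathbb{S}^N)$ into $L^{2^*}$ is compact, so after passing to a subsequence $u_{k,i}\to u_{\infty,i}\ge 0$ strongly in $H^1_g$ and in $L^{2^*}$. Strong convergence and the vanishing of the interaction integrals give, on one hand, $\int u_{\infty,i}^{\beta_{ij}}u_{\infty,j}^{\alpha_{ij}}=0$, i.e. the limits have essentially disjoint supports (phase separation), and on the other hand that each $u_{\infty,i}$ solves $\mathscr{L}_g u_{\infty,i}=u_{\infty,i}^{2^*-1}$ in the open set $U_i:=\{u_{\infty,i}>0\}$ with $u_{\infty,i}=0$ on $\partial U_i$ (continuity of $u_{\infty,i}$ will follow from the uniform estimates described below). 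This establishes (a) once I verify that $u_{\infty,i}|_{U_i}$ has least energy, which follows by comparing $\tfrac1N\int_{U_i}u_{\infty,i}^{2^*}$, obtained as the limit of the $i$-th component energy, with $c^{(m,n)}_{U_i}$ and noting that $u_{\infty,i}$ is itself an admissible competitor.

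To obtain (b) I would combine the lower bound $\sum_i c^{(m,n)}_{U_i}\le\lim_k\sum_i\tfrac1N\int|u_{k,i}|^{2^*}$ with the reverse inequality coming from the monotonicity and the upper energy bound above, concluding that $\{U_1,\dots,U_M\}$ realizes the infimum in \eqref{eq:4}; the correspondence in Theorem \ref{thm:partition} then identifies this optimal value with the least energy of a symmetric $M$-nodal solution. For (c) the essential simplification is that the $[O(m)\times O(n)]$-action reduces everything to the one-dimensional orbit space of $\mathbb{S}^N$, which is an arc whose two endpoints are the singular orbits $\mathbb{S}^{m-1}$ and $\mathbb{S}^{n-1}$; a symmetric partition thus descends to a partition of this arc, and minimality forces it to be an ordered decomposition into $M$ consecutive subintervals. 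Lifting the fiber structure back gives the asserted diffeomorphism types (the two end caps $\mathbb{S}^{m-1}\times\mathbb{B}^n$ and $\mathbb{B}^m\times\mathbb{S}^{n-1}$, the interior cylinders $\mathbb{S}^{m-1}\times\mathbb{S}^{n-1}\times(0,1)$), the interface orbits $\overline U_i\cap\overline U_{i+1}\cong\mathbb{S}^{m-1}\times\mathbb{S}^{n-1}$, the emptiness of $\overline U_i\cap\overline U_j$ for $|j-i|\ge 2$, and connectedness.

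Finally, the alternating sum $u=\sum_{i=1}^M(-1)^{i-1}u_{\infty,i}$ is continuous because consecutive components vanish on their common interface, and it solves \eqref{eq:1} across each interface provided the normal derivatives match there; this reflection identity is the delicate point and I expect it to be the main obstacle. I would derive it from the limit of the system together with the variational characterization: uniform Hölder bounds for competing systems (adapted to the manifold and to the critical exponent via the symmetric compact embedding) yield equicontinuity of the $u_{k,i}$ up to the free boundary, hence continuity of the limits, while a Pohozaev/domain-variation argument at the interface forces the $C^1$ matching and excludes a derivative jump. Granting this, $u$ is a genuine $[O(m)\times O(n)]$-invariant solution of \eqref{eq:1} whose nodal domains are exactly $U_1,\dots,U_M$, and its least-energy property among symmetric $M$-nodal solutions is again read off from Theorem \ref{thm:partition}.
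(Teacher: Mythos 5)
Your outline for parts (a) and (b) matches the paper's strategy (segregated competitors give the uniform energy bound, the Hebey--Vaugon compact embedding of the invariant subspace gives $L^{2^*}$-compactness, the estimate $|\lambda_{ij,k}|\int|u_{k,j}|^{\alpha_{ij}}|u_{k,i}|^{\beta_{ij}}\le C$ kills the interactions), but there are two genuine gaps where you substitute hard open-ended machinery for the arguments that actually close the proof. The decisive one is the interface matching in (c). You reduce it to a ``reflection identity'' for normal derivatives, to be obtained from uniform H\"older bounds \`a la \cite{nttv} plus a Pohozaev/domain-variation argument at the free boundary, and you yourself flag this as the main obstacle and then write ``granting this''. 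That is precisely the step the paper is structured to avoid: no free-boundary regularity or derivative-matching analysis is used at all. Instead (Theorem \ref{thm:partition}(iii)), for each pair of consecutive cells one forms the doubled domain $\Phi_i:=\operatorname{int}(\overline{\Omega}_i\cup\overline{\Omega}_{i+1})$, which by the orbit-space description is a smooth invariant domain, and minimizes the energy on the sign-changing Nehari set $\mathcal{E}_{\Phi_i}^\Gamma=\{w:w^\pm\in\mathcal{M}_{\Phi_i}^\Gamma\}$ in the spirit of Castro--Cossio--Neuberger \cite{ccn}; the symmetry makes the Palais--Smale condition hold despite the critical exponent. Optimality of the partition forces the glued function $\widetilde{w}_i=(-1)^{i-1}w_i+(-1)^iw_{i+1}$ to attain the minimum $d_{\Phi_i}^\Gamma$ (otherwise the nodal domains of a genuine minimizer would produce a strictly better $M$-partition), and a minimizer on $\mathcal{E}_{\Phi_i}^\Gamma$ is automatically a classical solution in $\Phi_i$ --- so the $C^1$ matching across each interface comes for free from the variational principle, with no Pohozaev identity and no equicontinuity of the $u_{k,i}$. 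Without this idea (or a complete execution of the free-boundary route, which at the critical exponent is not off-the-shelf), your proof of (c) is not complete.

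The second gap is the continuity of the limit profiles, which you again delegate to uniform H\"older bounds for the segregating system ``adapted to the manifold and to the critical exponent''. The paper explicitly identifies this adaptation as the delicate issue in the literature and circumvents it by symmetry: by Lemma \ref{lem:isometry2} every invariant $H^1_g$-function is a function of the one-dimensional orbit parameter, hence (Proposition \ref{prop:continuity}) continuous away from the two singular orbits, which have capacity zero; continuity on all of $\mathbb{S}^N$ then follows a posteriori from the smoothness of the cells $U_i$ and boundary regularity of the limit equation. Ironically, you already invoke the one-dimensional orbit space for the topology of the partition in (c) --- the same reduction gives you continuity for free, making the H\"older-bound machinery unnecessary. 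Two smaller omissions: you never verify $u_{\infty,i}\neq 0$ (the paper uses the uniform lower bound $\|v_{k,i}\|^2\ge d_0>0$ from \cite{cs} together with strong $L^{2^*}$-convergence, and this is needed for $U_i\neq\emptyset$), and strong $H^1_g$-convergence does not follow from the compact embedding alone --- it requires the energy-pinching chain in which the renormalized limits $(t_1u_{\infty,1},\ldots,t_Mu_{\infty,M})$ are tested against the segregated level $c_0^\Gamma$, forcing $t_i=1$.
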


As we mentioned before, the existence of a positive least energy fully nontrivial solution to the system \eqref{eq:2} was established in \cite{cs}. So Theorem \ref{thm:main} yields the following result.

\begin{corollary} \label{cor:main}
For any pair of integers $m,n\geq 2$ with $m+n=N+1$ and any $M\geq 2$, the following statements hold true:
\begin{itemize}
\item[$(i)$]There exists a solution to the optimal $M$-partition problem \eqref{eq:4} on the round sphere $\mathbb{S}^N$.
\item[$(ii)$]There exists a least energy $[O(m)\times O(n)]$-invariant sign-changing solution to the Yamabe problem \eqref{eq:1} with precisely $M$ nodal domains.
\end{itemize}
\end{corollary}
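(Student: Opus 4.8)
The plan is to deduce both statements directly from Theorem \ref{thm:main}, whose hypotheses I would arrange to hold by invoking the existence result of \cite{cs}. First I would fix, for each pair $i\neq j$, admissible exponents $\alpha_{ij},\beta_{ij}>1$ satisfying $\alpha_{ij}=\beta_{ji}$ and $\alpha_{ij}+\beta_{ij}=2^*$ (for instance $\alpha_{ij}=\beta_{ij}=2^*/2$), and choose a sequence of negative coupling coefficients $\lambda_{ij,k}=\lambda_{ji,k}$ with $\lambda_{ij,k}\to-\infty$ as $k\to\infty$; such a sequence clearly exists. Then, for each fixed $k$, I would apply the existence theorem of \cite{cs} to the system \eqref{eq:2} with $\lambda_{ij}=\lambda_{ij,k}$, obtaining a positive least energy fully nontrivial $[O(m)\times O(n)]$-invariant solution $u_k=(u_{k,1},\ldots,u_{k,M})$. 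This produces exactly the input required by Theorem \ref{thm:main}.

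With these solutions in hand, Theorem \ref{thm:main} applies verbatim. Passing to the subsequence it provides, part $(b)$ yields a family $\{U_1,\ldots,U_M\}\in\mathscr{P}_M^{(m,n)}$ that solves the optimal $M$-partition problem \eqref{eq:4} on $\mathbb{S}^N$, which is precisely statement $(i)$. Part $(c)$ then furnishes the function $u=\sum_{i=1}^M(-1)^{i-1}u_{\infty,i}$, an $[O(m)\times O(n)]$-invariant sign-changing solution of the Yamabe equation \eqref{eq:1} with exactly $M$ nodal domains and of least energy among all such solutions, which is statement $(ii)$.

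Since the corollary is a direct consequence of the theorem, there is no genuine obstacle at this stage; the only point requiring brief care is to confirm that the admissibility conditions on $(\alpha_{ij},\beta_{ij},\lambda_{ij})$ under which \cite{cs} guarantees existence remain compatible with the requirement $\lambda_{ij,k}\to-\infty$. This is immediate, because the existence result in \cite{cs} holds for every fixed admissible choice of exponents and every fixed negative coupling, independently of how negative the $\lambda_{ij}$ are. All the analytic difficulty—the strong $H^1_g(\mathbb{S}^N)$-convergence of the components, the regularity and connectedness of the limit sets $U_i$, and the identification of the limit as an optimal partition and a least energy nodal solution—is concentrated in Theorem \ref{thm:main}, not in its corollary.
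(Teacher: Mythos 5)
Your proposal is correct and follows exactly the paper's own route: the paper likewise deduces Corollary \ref{cor:main} by invoking the existence result of \cite{cs} to produce, for each $k$, a positive least energy fully nontrivial solution of the system \eqref{eq:2} with couplings $\lambda_{ij,k}\to-\infty$, and then applying Theorem \ref{thm:main}, with parts $(b)$ and $(c)$ giving statements $(i)$ and $(ii)$. Your additional remark that the existence theorem in \cite{cs} holds for every fixed admissible choice of exponents and negative couplings, uniformly in how negative the $\lambda_{ij}$ are, is a useful explicit check that the paper leaves implicit.
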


For each pair of integers $m,n\geq 2$ with $m+n=N+1$, W.Y. Ding established the existence of infinitely many $[O(m)\times O(n)]$-invariant sign-changing solutions to the problem \eqref{eq:1} in \cite{d}. 

A significant feature of these symmetries is that the space of $[O(m)\times O(n)]$-orbits in $\mathbb{S}^N$ is one-dimensional; see \eqref{eq:q}. This allows us to derive the continuity of the limit profiles $u_{\infty,i}$ of the least energy solutions to the system \eqref{eq:2} and to obtain a solution to the optimal $M$-partition problem \eqref{eq:4}; see Proposition~\ref{prop:continuity} and Theorem \ref{thm:phase_separation}. It also allows us to show that (after adding the two exceptional orbits $\S^{m-1}\times\{0\}$ and $\{0\}\times \S^{n-1}$) any solution to the optimal $M$-partition problem \eqref{eq:4} has the properties stated in $(c)$ of Theorem \ref{thm:main}. In particular, it is the set of nodal domains of a least energy $[O(m)\times O(n)]$-invariant $M$-nodal solution to the Yamabe problem \eqref{eq:1}; see Theorem \ref{thm:partition}. 

We prove in addition that, conversely, the set of nodal domains of a least energy $[O(m)\times O(n)]$-invariant $M$-nodal solution to \eqref{eq:1} solves \eqref{eq:4}; see Corollary \ref{cor:nodal}. This characterizes the close relationship between solutions to \eqref{eq:4} and least energy $[O(m)\times O(n)]$-invariant $M$-nodal solutions to \eqref{eq:1} on $\mathbb{S}^N$.

In \cite{fp} Fernández and Petean use the one-dimensionality of the orbit space to reduce problem \eqref{eq:1} to an ODE and they show the existence of an $[O(m)\times O(n)]$-invariant solution with precisely $M$ nodal domains via a double-shooting method. Our approach is independent of ODE techniques and it readily guarantees that the obtained $M$-nodal solution has least energy among all $[O(m)\times O(n)]$-invariant sign-changing solutions to the Yamabe problem \eqref{eq:1} with at least $M$ nodal domains.  We remark that it is not obvious to determine if the solutions given by Theorem~\ref{thm:main} and those obtained in \cite{fp} are the same or not. 

For a subcritical competitive elliptic system of two equations, the relation between phase separation, optimal $2$-partitions and $2$-nodal solutions to an elliptic equation was first established by Conti, Terracini and Verzini in \cite{ctv}. Theorem~\ref{thm:main} for $M=2$ was proved in \cite{cp}. The case $M=2$ is relatively simple because, as shown in \cite{ccn}, a $2$-nodal solution for the equation can be obtained by minimization of the energy functional on a suitable constraint. So one needs only to show that the sum of the limit profiles of the two components of the system, with opposite signs, is a minimizer. This immediately yields the continuity properties required to get an optimal partition; see \cite{cp}.

For $M>2$ the problem is, in general, much harder because there is no suitable constraint which gives rise to sign-changing solutions with precisely $M$ nodal domains via minimization. The relation between phase separation and optimal $M$-partitions has been studied, e.g., in \cite{tt,tt2,rtt} and some of the references therein. One main difficulty consists in establishing the uniform Hölder continuity of the solutions to the system \eqref{eq:2}, which is needed to derive some regularity of the limit profiles. This delicate question has been handled in \cite{nttv,tt2}. Another sensitive issue would be to determine whether these limit profiles can be ordered in such a way that their sum, with alternating signs, is a sign-changing solution to a related equation. This is not true in general.

In the situation considered in this paper, the symmetries are of help to treat both of these questions and to obtain the precise description of the topological nature of the optimal partition described in statement $(c)$ of Theorem \ref{thm:main}. 

It is worth adding that sign-changing solutions to the Yamabe problem \eqref{eq:1} on the round sphere, which are not $[O(m)\times O(n)]$-invariant, have been obtained in \cite{c,dmpp,fp}. 

This paper is organized as follows: Section \ref{sec:preliminaries} contains some preliminary material. In Section \ref{sec:euclidean} we translate the problems on a sphere to problems in a Euclidean space and, in Section \ref{sec:proof}, we prove our main results.

\section{Preliminaries} \label{sec:preliminaries}

Let $(\mathbb{S}^{N},g)$ be the round sphere and $p\in\mathbb{S}^{N}$ its north pole. The stereographic projection $\sigma:\mathbb{S}^{N}\smallsetminus\{p\}\to\mathbb{R}^{N}$ is a
conformal diffeomorphism. The coordinates of the standard metric $g$ in the chart given by $\sigma^{-1}:\mathbb{R}^{N}\to\mathbb{S}^{N}\smallsetminus\{p\}$ are $g_{ij}=\psi^{2^*-2}\delta_{ij}$, where
$$\psi(x):=\left(\frac{2}{1+|x|^{2}}\right)^{(N-2)/2},\qquad x\in\mathbb{R}^{N}.$$
Recall that $a_N := \frac{N(N-2)}4$. For $u\in\mathcal{C}^{\infty}(\mathbb{S}^{N})$, we set $v(x):=\psi(x)\,u(\sigma^{-1}(x))$. Then,
\begin{equation} \label{eq:laplacian}
\mathscr{L}_gu\circ\sigma^{-1}=\left(-\Delta_g u + a_Nu\right)\circ\sigma^{-1} = -\psi^{1-2^*}\Delta v\quad\text{in }\mathbb{R}^N;
\end{equation}
see, e.g., \cite[Proposition 6.1.1]{h}. This yields an equivalence between the Yamabe problem \eqref{eq:1} on $(\mathbb{S}^{N},g)$ and the problem
\begin{equation} \label{eq:1a}
-\Delta v = |v|^{2^*-2}v,\qquad v\in D^{1,2}(\mathbb{R}^N),
\end{equation}
where, as usual, $D^{1,2}(\mathbb{R}^N):=\{v\in L^{2^*}(\mathbb{R}^N):\nabla v\in L^2(\mathbb{R}^N,\mathbb{R}^N)\}$.

Fix $m,n\geq 2$ with $m+n=N+1$ and set $\Gamma:=O(m)\times O(n)$. A function $u:\mathbb{S}^N\to\mathbb{R}$ is $\Gamma$-\emph{invariant} if 
$$u(\gamma z)=u(z)\qquad \text{for every }\,\gamma\in\Gamma,\;z\in\mathbb{S}^{N}.$$ 
For each $\gamma\in\Gamma$, consider the map $\widetilde{\gamma}:=\sigma\circ\gamma^{-1}\circ\sigma^{-1}:\mathbb{R}^{N}\to\mathbb{R}^{N}$, which is well defined except at a single point. This gives a conformal action of $\Gamma$ on $\mathbb{R}^N$. We say that a function $v:\mathbb{R}^N\to\mathbb{R}$ is $\Gamma$-\emph{invariant} if
$$|\det\widetilde{\gamma}'(x)\,|^{1/2^*}v(\widetilde{\gamma}x)=v(x)\qquad \text{for every }\,\gamma\in\Gamma,\;x\in\mathbb{R}^{N}.$$
Noting that
$$|\det\widetilde{\gamma}'(x)| = \left(\frac{\psi(x)}{\psi(\widetilde{\gamma}(x))}\right)^{2^*},$$
we conclude that $u:\mathbb{S}^N\to\mathbb{R}$ is $\Gamma$-invariant iff $v:=\psi(u\circ\sigma^{-1}):\mathbb{R}^N\to\mathbb{R}$ is $\Gamma$-invariant. See \cite[Section 3]{cp} for more details.

As usual, let $H_g^1(\mathbb{S}^{N})$ be the closure of $\mathcal{C}^{\infty}(\mathbb{S}^{N})$ with respect to the norm $\|u\|_g:=\left(\int_{\mathbb{S}^{N}}(|\nabla_{g}u|_{g}^{2}+a_Nu^2)\mathrm{d}V_{g}\right)^{1/2}$, and let $H^1_g(\mathbb{S}^{N})^\Gamma$ and $D^{1,2}(\mathbb{R}^{N})^\Gamma$ denote the spaces of $\Gamma$-invariant functions in $H^1_g(\mathbb{S}^{N})$ and $D^{1,2}(\mathbb{R}^{N})$ respectively. 

\begin{lemma} \label{lem:isometry1}
If $u\in\mathcal{C}^{\infty}(\mathbb{S}^{N})$ and $v:=\psi(u\circ\sigma^{-1})$, then
$$\|u\|^2_g=\int_{\mathbb{S}^{N}}(|\nabla_{g}u|_{g}^{2}+a_Nu^2)\mathrm{d}V_{g} = \int_{\mathbb{R}^{N}}|\nabla v|^2\mathrm{d}x.$$
Therefore, the mapping $\mathscr{I}:H^1_g(\mathbb{S}^{N})^\Gamma\to D^{1,2}(\mathbb{R}^{N})^\Gamma$, given by $\mathscr{I} u:=\psi(u\circ\sigma^{-1})$, is an isometric isomorphism.
\end{lemma}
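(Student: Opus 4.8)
The plan is to reduce the first equality to a Green identity on the closed manifold $\mathbb{S}^N$ and then to prove the second, genuinely conformal, equality by pulling everything back to $\mathbb{R}^N$ through $\sigma^{-1}$. For the first step, since $\mathbb{S}^N$ has no boundary and $u$ is smooth, integrating by parts gives
$$\|u\|_g^2 = \int_{\mathbb{S}^N}(|\nabla_g u|_g^2 + a_N u^2)\,\mathrm{d}V_g = \int_{\mathbb{S}^N} u\,(-\Delta_g u + a_N u)\,\mathrm{d}V_g = \int_{\mathbb{S}^N} u\,\mathscr{L}_g u\,\mathrm{d}V_g,$$
so it remains only to identify this last integral with $\int_{\mathbb{R}^N}|\nabla v|^2\,\mathrm{d}x$.

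Next I would change variables via the conformal chart $\sigma^{-1}$. From $g_{ij}=\psi^{2^*-2}\delta_{ij}$ and the identity $2^*-2=\tfrac{4}{N-2}$ one computes the volume element $\mathrm{d}V_g=\psi^{(2^*-2)N/2}\,\mathrm{d}x=\psi^{2^*}\,\mathrm{d}x$. Combining this with the defining relation $u\circ\sigma^{-1}=\psi^{-1}v$ and with the transformation law \eqref{eq:laplacian}, namely $\mathscr{L}_gu\circ\sigma^{-1}=-\psi^{1-2^*}\Delta v$, the integrand becomes
$$(u\circ\sigma^{-1})\,(\mathscr{L}_gu\circ\sigma^{-1})\,\mathrm{d}V_g=(\psi^{-1}v)\,(-\psi^{1-2^*}\Delta v)\,\psi^{2^*}\,\mathrm{d}x=-v\,\Delta v\,\mathrm{d}x,$$
so that all powers of $\psi$ cancel \emph{exactly} and $\int_{\mathbb{S}^N}u\,\mathscr{L}_gu\,\mathrm{d}V_g=\int_{\mathbb{R}^N}(-v\,\Delta v)\,\mathrm{d}x$. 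A final integration by parts on $\mathbb{R}^N$ turns this into $\int_{\mathbb{R}^N}|\nabla v|^2\,\mathrm{d}x$, completing the chain of equalities.

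The point that requires genuine care — and which I regard as the main obstacle — is justifying that the integration by parts on the \emph{noncompact} space $\mathbb{R}^N$ produces no flux at infinity. Here the smoothness of $u$ at the north pole $p$ (the point omitted by $\sigma$) is essential: since $u\circ\sigma^{-1}$ stays bounded near infinity while $\psi(x)\sim 2^{(N-2)/2}|x|^{-(N-2)}$, one obtains the decay $v(x)=O(|x|^{-(N-2)})$ and $\nabla v(x)=O(|x|^{-(N-1)})$. Consequently the boundary term over $\partial B_R$ is controlled by $R^{-(N-2)}\cdot R^{-(N-1)}\cdot R^{N-1}=R^{-(N-2)}\to 0$ as $R\to\infty$, so the surface integral vanishes in the limit; this decay is also what guarantees $v\in D^{1,2}(\mathbb{R}^N)$ in the first place.

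Finally, to obtain the isometric isomorphism I would argue by density. The identity just proved shows that $\mathscr{I}$ is norm-preserving on the dense subspace $\mathcal{C}^\infty(\mathbb{S}^N)^\Gamma$ of $H^1_g(\mathbb{S}^N)^\Gamma$, hence extends to a linear isometry, in particular an injection with closed range. Since $\mathscr{I}$ carries $\Gamma$-invariant functions to $\Gamma$-invariant functions (by the equivalence established just before the lemma) and its range contains a dense subset of $D^{1,2}(\mathbb{R}^N)^\Gamma$, this closed range must be all of $D^{1,2}(\mathbb{R}^N)^\Gamma$; thus $\mathscr{I}$ is onto. The inverse is given explicitly by $v\mapsto(\psi^{-1}v)\circ\sigma$, which makes the isomorphism concrete.
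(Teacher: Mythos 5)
Your proof is correct and follows essentially the same route as the paper's: the paper likewise computes $\mathrm{d}V_g=\psi^{2^*}\mathrm{d}x$, multiplies \eqref{eq:laplacian} by $u\circ\sigma^{-1}$, and integrates by parts, so the powers of $\psi$ cancel exactly as in your display. The only difference is one of detail: you additionally justify the absence of flux at infinity via the decay $v=O(|x|^{-(N-2)})$, $\nabla v=O(|x|^{-(N-1)})$ and sketch the density argument for surjectivity, points the paper delegates to \cite[Section 3]{cp}.
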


\begin{proof}
The volume element on $(\mathbb{S}^{N},g)$ is $\mathrm{d}V_{g}=\sqrt{\det(g_{ij})}\,\mathrm{d}x=\psi^{2^*}\mathrm{d}x$. So, multiplying \eqref{eq:laplacian} by $u\circ\sigma^{-1}$ and integrating by parts, yields the identity; see \cite[Section 3]{cp} for more details.
\end{proof}

A crucial property of the $\Gamma$-action is the following one.

\begin{lemma} \label{lem:compactness}
The embeddings $H^1_g(\mathbb{S}^N)^\Gamma\hookrightarrow L_g^{2^*}(\mathbb{S}^N)$,\; $D^{1,2}(\mathbb{R}^N)^\Gamma\hookrightarrow L^{2^*}(\mathbb{R}^N)$ are compact.
\end{lemma}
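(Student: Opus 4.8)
The plan is to prove this compactness result by reducing it to a symmetric Sobolev-type compact embedding, exploiting the fact that the $\Gamma = O(m)\times O(n)$-action makes the orbit space one-dimensional. The two claimed embeddings are linked by the isometric isomorphism $\mathscr{I}$ of Lemma~\ref{lem:isometry1} (which conjugates the $H^1_g$-norm on the sphere to the $D^{1,2}$-norm on $\mathbb{R}^N$ and preserves $\Gamma$-invariance), together with the conformal change of variables $v = \psi\,(u\circ\sigma^{-1})$ which converts $\int_{\mathbb{S}^N}|u|^{2^*}\,\mathrm{d}V_g$ into $\int_{\mathbb{R}^N}|v|^{2^*}\,\mathrm{d}x$ because $\psi^{2^*}\,\mathrm{d}x = \mathrm{d}V_g$ and $|v|^{2^*} = \psi^{2^*}|u\circ\sigma^{-1}|^{2^*}$. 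Hence the two stated embeddings are in fact the \emph{same} statement transported between the two models, and it suffices to prove compactness for one of them.

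First I would reduce the $\Gamma$-invariant Euclidean problem to a lower-dimensional one. Writing points of $\mathbb{R}^N$ as $x=(y,z)$ with $y\in\mathbb{R}^m$, $z\in\mathbb{R}^n$ (using $m+n=N+1$ on the sphere, which on $\mathbb{R}^N=\mathbb{R}^{m+n-1}$ requires care with how $\Gamma$ acts conformally; concretely one works with the induced action so that invariant functions depend only on $(|y|,|z|)$ up to the conformal weight), a $\Gamma$-invariant function is determined by a function of the two radial variables $s=|y|$, $t=|z|$. The key geometric input, already advertised in the introduction via \eqref{eq:q}, is that the orbit space is one-dimensional, so that after the reduction the relevant invariant functions live on a domain parametrized by essentially a single variable. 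I would make this precise on the sphere side: the $\Gamma$-orbit map sends $\mathbb{S}^N$ onto an interval, and a $\Gamma$-invariant $H^1_g$-function descends to a weighted Sobolev function of one variable.

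The core analytic step is then a compact embedding for this one-variable weighted problem. I would show that $\Gamma$-invariant functions in $H^1_g(\mathbb{S}^N)^\Gamma$ descend to functions in a weighted space $H^1_w((0,\tfrac{\pi}{2}))$ (or an equivalent interval), where the weight comes from the volume of the orbit $\mathbb{S}^{m-1}\times\mathbb{S}^{n-1}$ scaled appropriately, and that the $L^{2^*}$-norm descends to a weighted $L^{2^*}$-norm with the same volume weight. Since the orbit-space domain is a bounded one-dimensional interval and the weight is bounded between positive constants on compact subsets, the embedding $H^1_w\hookrightarrow L^q_w$ is compact for every $q<\infty$ by the one-dimensional Rellich–Kondrachov theorem (in one dimension $H^1$ embeds compactly into $L^q$ for all finite $q$), and in particular for $q=2^*$. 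The weight degenerates at the two endpoints (where one of the two spheres collapses), so the genuine obstacle is controlling the behavior there: I must verify that the degeneracy of the weight is mild enough (a positive power of the distance to the endpoint, governed by $m-1$ and $n-1$) that no mass escapes to the boundary in the limit, i.e. that a bounded sequence in the weighted $H^1$ space is tight near the endpoints.

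The main obstacle, therefore, is the endpoint (boundary-of-orbit-space) analysis rather than the interior compactness, which is classical. I would handle it by a cutoff argument: split the orbit interval into an interior compact part, where the weight is bounded away from $0$ and $\infty$ and one-dimensional Rellich gives compactness directly, and two endpoint collars, where I would use a weighted Hardy- or Poincaré-type inequality adapted to the degenerate weight to show that the $L^{2^*}$-mass in a collar of width $\delta$ is uniformly small, uniformly along the bounded sequence, as $\delta\to 0$. Combining the interior compact convergence with the uniform smallness of the endpoint contributions yields strong $L^{2^*}$-convergence of a subsequence, which is exactly the asserted compactness; transporting back through $\mathscr{I}$ and the conformal change of variables gives both embeddings simultaneously. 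An alternative, cleaner route that avoids the explicit weight computation is to invoke the general principle (as in the cited references \cite{cp,cs}) that if a compact group acts on a Riemannian manifold with all orbits of positive dimension, then the subspace of invariant $H^1$-functions embeds compactly into $L^{2^*}$ even at the critical exponent; here every $\Gamma$-orbit has dimension at least $\min(m,n)-1\ge 1$ since $m,n\ge 2$, so this principle applies and immediately yields the result. I would present the symmetric-criticality/positive-dimensional-orbit argument as the main line and relegate the weighted one-dimensional computation to a remark.
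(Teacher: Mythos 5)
Your main line coincides with the paper's proof: since $m,n\geq 2$, every $\Gamma$-orbit in $\mathbb{S}^N$ has dimension at least $\min\{m-1,n-1\}\geq 1$, so the compact embedding $H^1_g(\mathbb{S}^N)^\Gamma\hookrightarrow L^{2^*}_g(\mathbb{S}^N)$ follows from the symmetric Sobolev compactness theorem of Hebey and Vaugon (the paper cites \cite[Corollary 1]{hv}, which is the correct source rather than \cite{cp,cs}), and the Euclidean case is then transported through the isometry $\mathscr{I}$ of Lemma~\ref{lem:isometry1}, exactly as you do. Your alternative weighted one-dimensional orbit-space argument is not needed and, as written, leaves the endpoint degeneracy analysis as a plan rather than a proof, so relegating it to a remark, as you propose, is appropriate.
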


\begin{proof}
Since the dimension of every $\Gamma$-orbit in $\mathbb{S}^N$ is at least $\min\{m-1,n-1\}\geq 1$, by \cite[Corollary 1]{hv} we have that $H^1_g(\mathbb{S}^N)^\Gamma\hookrightarrow L_g^{2^*}(\mathbb{S}^N)$ is compact. The statement for $\mathbb{R}^N$ follows from Lemma \ref{lem:isometry1}.
\end{proof}

The $\Gamma$-orbit space of $\S^N$, i.e., the quotient space obtained by identifying each $\Gamma$-orbit $\Gamma z:=\{\gamma z:\gamma\in\Gamma\}$ in $\S^N$ to a single point, may be described as follows. We write the points in $\S^N$ as $z=(z_1,z_2)$ with $z_1\in\R^m,\ z_2\in\R^n$, and define $q:\S^N\to[0,\pi]$ by
\begin{equation} \label{eq:q}
q(z_1,z_2)=\arccos(|z_1|^2-|z_2|^2).
\end{equation}
This function is a quotient map which identifies each $\Gamma$-orbit in $\S^N$ to a single point. So the $\Gamma$-orbit space of $\S^N$ is one-dimensional. Note that
$$q^{-1}(0)\cong\mathbb{S}^{m-1},\qquad q^{-1}(t)\cong\mathbb{S}^{m-1}\times\mathbb{S}^{n-1}\text{ if  }t\in(0,\pi),\qquad q^{-1}(\pi)\cong\mathbb{S}^{n-1}.$$
We call $q$ the $\Gamma$-\emph{orbit map} of $\S^N$.

Next, we describe the norm induced by $\|\cdot\|_g$ in $\mathcal{C}^\infty[0,\pi]$, via the $\Gamma$-orbit map. Our intention is to take advantage of the one-dimensionality of the $\Gamma$-orbit space to deduce some continuity properties of the functions in $H^1_g(\mathbb{S}^{N})^\Gamma$; see Proposition \ref{prop:continuity}.

Let $H_h^1(0,\pi)$ be the closure of $\mathcal{C}^\infty[0,\pi]$ with respect to the norm
\begin{align*}
 \|w\|_h:=\left(\int_0^\pi \left(|w'(t)|^2 + \frac{a_N}{4}|w|^{2} \right)h(t)\ dt\right)^{\frac{1}{2}},
\end{align*}
where 
$$h(t):=2\, |\S^{m-1}|\, |\S^{n-1}|\,\cos^{m-1}(\frac{t}{2})\,\sin^{n-1}(\frac{t}{2}).$$

\begin{lemma} \label{lem:isometry2} 
For every $u\in\mathcal{C}^\infty(\S^N)^\Gamma$ there exists a unique $w\in\mathcal{C}^\infty[0,\pi]$ such that $u=w\circ q$ and
$$\|u\|_g^2=\int_{\mathbb{S}^{N}}(|\nabla_{g}u|_{g}^{2}+a_Nu^2)\mathrm{d}V_{g} = \|w\|_h^2.$$
Therefore, the mapping $\mathscr{J}:H^1_h(0,\pi)\to H^1_g(\mathbb{S}^{N})^\Gamma$, given by $\mathscr{J}w:=w\circ q$, is an isometric isomorphism.
\end{lemma}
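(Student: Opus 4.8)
The plan is to exploit coordinates on $\S^N$ adapted to the orbit map $q$, in which the round metric becomes an explicit warped product, so that both the factorization $u=w\circ q$ and the norm identity reduce to a one–dimensional computation; the isomorphism is then obtained by extending an isometry defined on smooth functions to the completions. Away from the two exceptional orbits I would parametrize $\S^N$ by $(\theta,\omega_1,\omega_2)\in(0,\tfrac{\pi}{2})\times\S^{m-1}\times\S^{n-1}$ through $z=(\cos\theta\,\omega_1,\sin\theta\,\omega_2)$, so that $|z_1|^2-|z_2|^2=\cos(2\theta)$ and hence $q(z)=2\theta$.

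First I would establish the factorization and the smoothness of $w$. Since $q$ is a quotient map whose fibers are exactly the $\Gamma$–orbits, a $\Gamma$–invariant $u$ is constant on the fibers of $q$ and therefore descends to a unique $w\colon[0,\pi]\to\R$ with $u=w\circ q$; uniqueness is immediate from the surjectivity of $q$. To see that $w\in\mathcal{C}^\infty[0,\pi]$ — the only delicate point being the endpoints $t=0,\pi$, which correspond to the exceptional orbits — I would fix $\omega_1\in\S^{m-1}$, $\omega_2\in\S^{n-1}$ and restrict $u$ to the great circle $c(\theta):=(\cos\theta\,\omega_1,\sin\theta\,\omega_2)$. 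Then $u(c(\theta))=w(2\theta)$, and since $c$ is a smooth curve on $\S^N$ and $u\in\mathcal{C}^\infty(\S^N)$, the map $\theta\mapsto u(c(\theta))$ is smooth on all of $[0,\tfrac{\pi}{2}]$, endpoints included; composing with $t\mapsto t/2$ shows $w\in\mathcal{C}^\infty[0,\pi]$.

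Next I would verify the norm identity. In the coordinates above the round metric is the warped product $g=\mathrm{d}\theta^2+\cos^2\theta\,g_{\S^{m-1}}+\sin^2\theta\,g_{\S^{n-1}}$, so that $\mathrm{d}V_g=\cos^{m-1}\theta\,\sin^{n-1}\theta\,\mathrm{d}\theta\,\mathrm{d}V_{\S^{m-1}}\,\mathrm{d}V_{\S^{n-1}}$ and, for $u=w\circ q$, one has $|\nabla_g u|_g^2=(\partial_\theta u)^2=4\,w'(2\theta)^2$ because $g^{\theta\theta}=1$ and $u$ depends on $\theta$ alone. Integrating out the two sphere factors yields $|\S^{m-1}|\,|\S^{n-1}|$, and after the substitution $t=2\theta$ the integral becomes
$$\|u\|_g^2=2\,|\S^{m-1}|\,|\S^{n-1}|\int_0^\pi\Big(|w'(t)|^2+\tfrac{a_N}{4}\,w(t)^2\Big)\cos^{m-1}(\tfrac{t}{2})\,\sin^{n-1}(\tfrac{t}{2})\,\mathrm{d}t=\|w\|_h^2,$$
the factors $4$ and $\tfrac12$ coming from the differentiation and the change of variables combining exactly into the weight $h$. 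The bookkeeping of these constants is the main place to be careful in this step.

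Finally I would upgrade this to the isometric isomorphism. The map $\mathscr{J}w:=w\circ q$ is linear and, by the computation above, isometric from $(\mathcal{C}^\infty[0,\pi],\|\cdot\|_h)$ into $H^1_g(\S^N)^\Gamma$; by the factorization step its image already contains every $u\in\mathcal{C}^\infty(\S^N)^\Gamma$, which is dense in $H^1_g(\S^N)^\Gamma$ (approximate an invariant $H^1$ function by smooth ones and average over the compact group $\Gamma$). An isometry with dense image extends uniquely to an isometric isomorphism $\mathscr{J}\colon H^1_h(0,\pi)\to H^1_g(\S^N)^\Gamma$ between the completions. The subtle point here — and the step I expect to cost the most care — is to check that $\mathscr{J}$ genuinely lands in $H^1_g(\S^N)^\Gamma$ for \emph{every} smooth $w$, not only for those yielding a smooth $u$: the orbit map $q$ fails to be smooth precisely at the exceptional orbits $\S^{m-1}\times\{0\}$ and $\{0\}\times\S^{n-1}$, so $w\circ q$ need not be smooth on $\S^N$. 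However $|\nabla_g(w\circ q)|_g=2|w'(2\theta)|$ is bounded, making $w\circ q$ Lipschitz, and since these orbits have codimension $n\geq 2$ and $m\geq 2$ respectively they are removable for $H^1_g$; hence $w\circ q\in H^1_g(\S^N)^\Gamma$ with the gradient computed above, which is what makes the identification valid up to the endpoints.
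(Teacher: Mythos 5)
Your proof is correct, and its computational core coincides with the paper's: both reduce $\|u\|_g$ to a weighted one-dimensional integral over the orbit space with the weight $h$. The difference is in execution and in how much gets verified. The paper works with $f(z_1,z_2)=|z_1|^2-|z_2|^2$, computes $|\nabla_g f|^2=4(1-f^2)$, and outsources the slicing identity $\int_{\S^N}\theta\circ f\,\mathrm{d}V_g=\frac{1}{4}\int_0^\pi\theta(\cos t)\,h(t)\,\mathrm{d}t$ to \cite[Lemma 2.2]{fp}; you instead derive everything self-containedly from the doubly warped product $g=\mathrm{d}\theta^2+\cos^2\theta\,g_{\S^{m-1}}+\sin^2\theta\,g_{\S^{n-1}}$ with $q=2\theta$, and your constants check out: $\bigl(\partial_\theta w(2\theta)\bigr)^2=4\,w'(2\theta)^2$ combined with the substitution $t=2\theta$ produces exactly the weight $h$ and the coefficient $a_N/4$. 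More substantively, you prove three things the paper compresses into ``Clearly'' and ``Therefore'': smoothness of $w$ up to the endpoints $t=0,\pi$ via restriction of $u$ to a great circle; density of $\cC^\infty(\S^N)^\Gamma$ in $H^1_g(\S^N)^\Gamma$ by averaging over the compact group $\Gamma$; and---the genuinely delicate point you correctly isolate---that $\mathscr{J}w=w\circ q$ lies in $H^1_g(\S^N)^\Gamma$ for \emph{every} $w\in\cC^\infty[0,\pi]$, even though $q$ fails to be smooth on the two exceptional orbits, so that the isometry is defined on all of $\cC^\infty[0,\pi]$, which is dense in $H^1_h(0,\pi)$ by the very definition of that space; this is what makes the extension argument close surjectivity cleanly. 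Your Lipschitz reasoning there is sound, and can even be streamlined: off the exceptional set one has $|\nabla_g q|=|\arccos'(f)|\,|\nabla_g f|=\frac{2\sqrt{1-f^2}}{\sqrt{1-f^2}}\equiv 2$, so $q$ itself is $2$-Lipschitz on $\S^N$ and Rademacher's theorem gives $w\circ q\in H^1_g$ with the stated gradient, without needing the capacity-zero remark. The only microscopic point you could add is that for general $w\in H^1_h(0,\pi)$ the abstract isometric extension agrees with the pointwise formula $w\circ q$: this follows since $H^1_h$-convergence implies locally uniform convergence on $(0,\pi)$ (as in Proposition \ref{prop:continuity}), so the a.e.\ limits match. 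Neither this nor anything else in your argument is a gap.
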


\begin{proof}
Let $f:\S^N\to[-1,1]$ denote the function 
$$f(z_1,z_2)=|z_1|^2-|z_2|^2.$$ 
Then, $\nabla_g f(z_1,z_2)=4(|z_2|^2z_1 ,-|z_1|^2z_2)$ and
$$|\nabla_g f(z_1,z_2)|^2 = 16 |z_1|^2|z_2|^2 = (b\circ f)(z_1,z_2),$$
where $b:[-1,1]\to\R$ is given by $b(t)=4(1-t^2)$. 

Clearly, for every $u\in\mathcal{C}^\infty(\S^N)^\Gamma$, there exists a unique $w\in\mathcal{C}^\infty[0,\pi]$ such that 
$$u=w\circ q=\phi\circ f,\quad \text{with } \phi=w\circ \arccos.$$
 As $\nabla_g u = (\phi'\circ f)\nabla_g f$, we get that
\begin{align*}
|\nabla_g u|^2 = |\phi'\circ f|^2 (b\circ f) =(|\phi'|^2 b)\circ f = \theta\circ f,\quad \text{with }\theta:=|\phi'|^2 b.
\end{align*}
A straightforward computation (see \cite[Lemma 2.2]{fp}) gives
\begin{align} \label{eq:equality}
\int_{\S^N}|\nabla_g u|^2 \ \mathrm{d}V_g=\int_{\S^N} \theta\circ f\ \mathrm{d}V_g
 =\frac{1}{4}\int_0^\pi \theta(\cos(t))h(t)\ \mathrm{d}t.
\end{align}
Since $\phi'(s)=w'(\arccos (s))\left( \frac{-1}{\sqrt{1-s^2}} \right)$, setting $s=\cos t$ we get that
\begin{align*}
 \theta(\cos(t))=|\phi'(\cos(t))|^2\, b(\cos(t)) =|w'(t)|^2\frac{1}{\sin^2t}\,4(1-\cos^2t)=4|w'(t)|^2.
\end{align*}
Hence,
$$\int_{\S^N}|\nabla_g u|^2 \ \mathrm{d}V_g =\int_0^\pi |w'(t)|^2h(t)\ \mathrm{d}t.$$
Similarly, taking \,$\theta := w^2\circ\arccos$\, in the second identity in \eqref{eq:equality}, one sees that
$$\int_{\S^N}u^2 \ \mathrm{d}V_g =\frac{1}{4}\int_0^\pi |w(t)|^2h(t)\ \mathrm{d}t.$$
This completes the proof.
\end{proof}

The following fact plays an important role in the proof of our main result; see Theorem \ref{thm:phase_separation}.

\begin{proposition} \label{prop:continuity}
Let $Z:=(\S^{m-1}\times\{0\})\, \cup\, (\{0\}\times\S^{n-1})\subset\S^N$. For every $u\in H^1_g(\S^N)^\Gamma$ there exists $\bar u\in \cC^0(\S^N\smallsetminus Z)$ such that $u=\bar u$ a.e. in $\S^N$.
\end{proposition}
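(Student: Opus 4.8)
The plan is to use the isometric isomorphism $\mathscr{J}:H^1_h(0,\pi)\to H^1_g(\S^N)^\Gamma$ from Lemma~\ref{lem:isometry2} to reduce the statement to a one-dimensional regularity question. Given $u\in H^1_g(\S^N)^\Gamma$, I would set $w:=\mathscr{J}^{-1}u\in H^1_h(0,\pi)$. Choosing smooth $w_k\to w$ in $H^1_h(0,\pi)$, Lemma~\ref{lem:isometry2} gives $w_k\circ q=\mathscr{J}w_k\to u$ in $H^1_g(\S^N)^\Gamma$; after passing to a subsequence that also converges a.e., one obtains $u=w\circ q$ a.e.\ on $\S^N$. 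Thus everything reduces to producing a continuous representative of $w$ on the \emph{open} interval $(0,\pi)$ and pulling it back by $q$.

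First I would show that $w$ has a continuous representative $\bar w$ on $(0,\pi)$. The weight $h(t)=2\,|\S^{m-1}|\,|\S^{n-1}|\,\cos^{m-1}(t/2)\,\sin^{n-1}(t/2)$ is smooth and strictly positive on $(0,\pi)$, degenerating only at the endpoints (where $\sin(t/2)$, resp.\ $\cos(t/2)$, vanishes). Hence on every compact subinterval $[a,b]\subset(0,\pi)$ there are constants $0<c_1\le h\le c_2$, so the restriction of $\|\cdot\|_h$ to $[a,b]$ is equivalent to the standard norm of $H^1(a,b)$. Consequently $w|_{(a,b)}\in H^1(a,b)$, and by the one-dimensional Sobolev embedding $H^1(a,b)\hookrightarrow\cC^0([a,b])$ the function $w$ admits a continuous (indeed absolutely continuous) representative on $[a,b]$. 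Exhausting $(0,\pi)$ by such intervals yields a well-defined $\bar w\in\cC^0(0,\pi)$ with $\bar w=w$ a.e.

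Next I would set $\bar u:=\bar w\circ q$ on $\S^N\smallsetminus Z=q^{-1}((0,\pi))$. Writing $q=\arccos\circ f$ with $f(z_1,z_2)=|z_1|^2-|z_2|^2$, one checks that $f$ takes values in $(-1,1)$ precisely on $\S^N\smallsetminus Z$, where $\arccos$ is smooth and $f$ is a polynomial; hence $q$ is continuous on $\S^N\smallsetminus Z$, and composing with $\bar w\in\cC^0(0,\pi)$ gives $\bar u\in\cC^0(\S^N\smallsetminus Z)$. To identify $u$ with $\bar u$ a.e., note that $Z$ is a finite union of lower-dimensional submanifolds and hence a null set in $\S^N$, while $w=\bar w$ outside a Lebesgue-null set $A\subset(0,\pi)$. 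Taking $\theta$ to be an indicator in the push-forward identity underlying \eqref{eq:equality} shows that $q_*(\mathrm{d}V_g)$ is a multiple of $h(t)\,\mathrm{d}t$, which is absolutely continuous with respect to Lebesgue measure; therefore $q^{-1}(A)$ is null in $\S^N$, so $w\circ q=\bar w\circ q$ a.e., and combined with $u=w\circ q$ a.e.\ this yields $u=\bar u$ a.e.\ on $\S^N$.

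I expect the only genuinely delicate point to be the degeneracy of $h$ at $t\in\{0,\pi\}$: there the weighted space $H^1_h$ behaves like the radial Sobolev space in $m$ or $n$ dimensions, where functions need not be bounded near the origin, so no continuous representative can be expected on all of $\S^N$. This is exactly why the exceptional orbit set $Z=q^{-1}(\{0,\pi\})$ must be removed, and it accounts for the precise form of the statement; the remaining measure-theoretic bookkeeping for the a.e.\ identification is routine.
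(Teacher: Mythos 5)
Your proposal is correct and takes essentially the same route as the paper: the paper's proof likewise observes that $\|\cdot\|_h$ is equivalent to the standard $H^1$ norm on the subintervals $(\eps,\pi-\eps)$, invokes the one-dimensional embedding $H^1(\eps,\pi-\eps)\subset\cC^0(\eps,\pi-\eps)$, and concludes via the isometric isomorphism $\mathscr{J}$ of Lemma~\ref{lem:isometry2}. The only difference is that you make explicit the measure-theoretic bookkeeping (the a.e.\ identification $u=\bar w\circ q$ via the push-forward $q_*(\mathrm{d}V_g)=\tfrac14 h(t)\,\mathrm{d}t$), which the paper leaves implicit.
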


\begin{proof}
For every $\eps\in(0,\frac{\pi}{2})$, the norm $\|\cdot\|_h$ in $H_h^1(\eps,\pi-\eps)$ is equivalent to the standard norm in $H^1(\eps,\pi-\eps)$. Hence, $H_h^1(\eps,\pi-\eps)=H^1(\eps,\pi-\eps)\subset\cC^0(\eps,\pi-\eps)$ for every $\eps\in(0,\frac{\pi}{2})$. The claim now follows from Lemma \ref{lem:isometry2}.
\end{proof}
  
\begin{remark}
Observe that there are functions in $H_h^1(0,\pi)$ which are singular at $0$ and at $\pi$; for example, $w(t) = \ln(-\ln(\frac{t}{2\pi})) + \ln(-\ln(\frac{\pi - t}{2\pi}))$ belongs to $H_h^1(0,\pi)$.
\end{remark}

\section{The result in Euclidean space} \label{sec:euclidean}

As before, we fix $m,n\geq 2$ with $m+n=N+1$ and write $\Gamma:=O(m)\times O(n)$. We consider the conformal action of $\Gamma$ on $\mathbb{R}^N$ introduced in Section \ref{sec:preliminaries}. So, a subset $X$ of $\mathbb{R}^N$ is \emph{$\Gamma$-invariant} if 
$$\widetilde{\gamma}x:=(\sigma\circ\gamma^{-1}\circ\sigma^{-1})(x)\in X\qquad\forall\gamma\in\Gamma,\;\forall x\in X.$$

Using the identity \eqref{eq:laplacian} it is readily seen that the competitive system \eqref{eq:2} on $\S^N$ is equivalent to the competitive elliptic system in $\mathbb{R}^N$
\begin{equation} \label{eq:2a}
\begin{cases}
-\Delta v_i = |v_i|^{2^*-2}v_i + \sum\limits_{j\neq i} \lambda_{ij}\beta_{ij}|v_j|^{\alpha_{ij}}|v_i|^{\beta_{ij}-2}v_i, \\
v_i\in D^{1,2}(\mathbb{R}^N)^\Gamma,\qquad i,j=1,\ldots,M.
\end{cases}
\end{equation}
More precisely, setting $v_i(x):=\psi(x)u_i(\sigma^{-1}(x))$, we have that $(u_1,\ldots,u_M)$ solves \eqref{eq:2} iff $(v_1,\ldots,v_M)$ solves \eqref{eq:2a}.

We write $\|\cdot\|$ and $|\cdot|_{2^*}$ for the norms in $D^{1,2}(\mathbb{R}^N)$ and $L^{2^*}(\mathbb{R}^N)$, i.e.,
$$\|v\|^2:=\int_{\mathbb{R}^N}|\nabla v|^2,\qquad|v|^{2^*}_{2^*}:=\int_{\mathbb{R}^N}|v|^{2^*},$$
and consider the Hilbert space $\mathcal{H}:=(D^{1,2}(\mathbb{R}^N)^\Gamma)^M$ with the obvious norm. The functional $\mathcal{J}:\mathcal{H}\to\mathbb{R}$ given by 
$$\mathcal{J}(v_1,\ldots,v_M) := \frac{1}{2}\sum_{i=1}^M\|v_i\|^2 - \frac{1}{2^*}\sum_{i=1}^M|v_i|^{2^*}_{2^*} - \frac{1}{2}\sum_{j\neq i}\int_{\mathbb{R}^N}\lambda_{ij}|v_j|^{\alpha_{ij}}|v_i|^{\beta_{ij}},$$
is of class $\mathcal{C}^1$ and, since $\lambda_{ij}=\lambda_{ji}$ and $\beta_{ij}=\alpha_{ji}$, we have that
\begin{align*}
\partial_i\mathcal{J}(v_1,\ldots,v_M)[v]=\int_{\mathbb{R}^N}\nabla v_i\cdot\nabla v &- \int_{\mathbb{R}^N}|v_i|^{2^*-2}v_iv \\
&- \sum_{j\neq i}\int_{\mathbb{R}^N}\lambda_{ij}\beta_{ij}|v_j|^{\alpha_{ij}}|v_i|^{\beta_{ij}-2}v_iv,
\end{align*}
for any $v\in D^{1,2}(\mathbb{R}^N)^\Gamma$, $i=1,\ldots,M$. So the critical points of $\mathcal{J}$ are the solutions to the system \eqref{eq:2a}; see \cite{cs}. The fully nontrivial ones belong to the set
\begin{equation*}
\mathcal{N}^\Gamma := \{(v_1,\ldots,v_M)\in\mathcal{H}:v_i\neq 0, \;\partial_i\mathcal{J}(v_1,\ldots,v_M)[v_i]=0, \; \forall i=1,\ldots,M\}.
\end{equation*}
Note that 
\begin{equation*}
\mathcal{J}(v_1,\ldots,v_M) = \frac{1}{N}\sum_{i=1}^M\|v_i\|^2\qquad\text{if }(v_1,\ldots,v_M)\in\mathcal{N}^\Gamma.
\end{equation*}
It is shown in \cite[Theorem 1.2]{cs} that $\inf_{\mathcal{N}^\Gamma}J$ is attained at some $(v_1,\ldots,v_M)\in\mathcal{N}^\Gamma$ with $v_i\geq 0$.

On the other hand, the optimal $M$-partition problem \eqref{eq:4} on $\mathbb{S}^N$ is equivalent to an optimal $M$-partition problem in $\mathbb{R}^N$. 
Namely, if $\Omega$ is a $\Gamma$-invariant open subset of $\mathbb{R}^N$, we denote by $D^{1,2}_0(\Omega)^\Gamma$ the space of $\Gamma$-invariant functions in $D^{1,2}_0(\Omega)$, where as usual $D^{1,2}_0(\Omega)$ is the closure of $\mathcal{C}_c^\infty(\Omega)$ in $D^{1,2}(\mathbb{R}^N)$, and we consider the energy functional and the Nehari manifold
\begin{align*}
&J_\Omega(v):=\frac{1}{2}\int_\Omega|\nabla v|^2-\frac{1}{2^*}\int_\Omega|v|^{2^*},\\
&\mathcal{M}_\Omega^\Gamma:=\{v\in D^{1,2}_0(\Omega)^\Gamma:v\neq 0,\;J_\Omega'(v)v=0\},
\end{align*}
associated to the problem
\begin{equation} \label{eq:3a}
-\Delta v = |v|^{2^*-2}v, \qquad v\in D^{1,2}_0(\Omega)^\Gamma.
\end{equation}
Then, \eqref{eq:4} is equivalent to the optimal $M$-partition problem
\begin{equation} \label{eq:4a}
\inf_{\{\Omega_1,\ldots,\Omega_M\}\in\mathcal{P}_M^\Gamma}\;\sum_{i=1}^M c_{\Omega_i}^\Gamma,\qquad\text{where }c_{\Omega_i}^\Gamma:=\inf_{\mathcal{M}^\Gamma_{\Omega_i}}J_{\Omega_i}
\end{equation}
and
\begin{align*}
\mathcal{P}_M^\Gamma:=\{\{\Omega_1,\ldots,\Omega_M\}:\;& \Omega_i\neq\emptyset \text{ is }\Gamma\text{-invariant and open in }\mathbb{R}^N\;\forall i=1,\ldots,M, \\
&\text{ and }\Omega_i\cap\Omega_j=\emptyset\text{ if }i\neq j\}.
\end{align*}
More precisely, setting $U_i:=\sigma^{-1}(\Omega_i)$ where $\sigma$ is the stereographic projection, we have that $\{U_1,\ldots,U_M\}$ solves the optimal $M$-partition problem \eqref{eq:4} on $\mathbb{S}^N$ iff $\{\Omega_1,\ldots,\Omega_M\}$ solves the optimal $M$-partition problem \eqref{eq:4a} in $\mathbb{R}^N$.

Note that, if $\{\Omega_1,\ldots,\Omega_M\}\in\mathcal{P}_M^\Gamma$ and $v_i\in\mathcal{M}_{\Omega_i}^\Gamma$ then, since $v_iv_j=0$ for $i\neq j$, we have that $(v_1,\ldots,v_M)\in\mathcal{N}^\Gamma$ and $\mathcal{J}(v_1,\ldots,v_M)=J_{\Omega_1}(v_1)+\cdots+J_{\Omega_M}(v_M)$. Therefore, $\inf_{\mathcal{N}^\Gamma}\mathcal{J}\leq c_{\Omega_1}^\Gamma+\cdots+c_{\Omega_M}^\Gamma$ and, consequently,
\begin{equation} \label{eq:comparison}
\inf_{\mathcal{N}^\Gamma}\mathcal{J}\leq \inf_{\{\Omega_1,\ldots,\Omega_M\}\in\mathcal{P}_M^\Gamma}\;\sum_{i=1}^M c_{\Omega_i}^\Gamma.
\end{equation}

Theorem \ref{thm:main} can be restated as follows.

\begin{theorem} \label{thm:main_RN}
For each $i,j=1,\ldots,M$, $i\ne j$, let $(\lambda_{ij,k})$ be a sequence of negative numbers such that $\lambda_{ij,k}\to -\infty$ as $k\to\infty$, and  let $v_{k}=(v_{k,1},\ldots,v_{k,M})$ be a positive least energy fully nontrivial $\Gamma$-invariant solution to the system \eqref{eq:2a} with $\lambda_{ij}=\lambda_{ij,k}$. Then, after passing to a subsequence, we have that
\begin{itemize}
\item[$(a)$]$v_{k,i}\to v_{\infty,i}$ strongly in $D^{1,2}(\mathbb{R}^N)$,\, $v_{\infty,i}\geq 0$,\, $v_{\infty,i}$ is continuous and $v_{\infty,i}|_{\Omega_i}$ is a least energy solution to the problem \eqref{eq:3a} in $\Omega_i:=\{x\in\mathbb{R}^{N}:v_{\infty,i}(x)>0\}$, for each $i=1,\ldots,M$.
\item[$(b)$]$\{\Omega_1,\ldots,\Omega_M\}\in\mathcal{P}^\Gamma_M$ and it solves the optimal $M$-partition problem \eqref{eq:4a} in $\mathbb{R}^{N}$.
\item[$(c)$]$\Omega_1,\ldots,\Omega_M$ are smooth and connected, $\overline{\Omega_1\cup\cdots\cup \Omega_M}=\mathbb{R}^{N}$ and, after reordering, we have that $\Omega_1,\ldots,\Omega_{M-1}$ are bounded, $\Omega_M$ is unbounded,
\begin{itemize}
\item[$(c_1)$] $\Omega_1\cong\mathbb{S}^{m-1}\times \mathbb{B}^{n}$,\quad $\Omega_i\cong\mathbb{S}^{m-1}\times\mathbb{S}^{n-1}\times(0,1)$ if  $i=2,\ldots,M-1$, and\quad $\Omega_M\cup\{\infty\}\cong\mathbb{B}^m\times \mathbb{S}^{n-1}$,
\item[$(c_2)$] $\overline{\Omega}_i\cap \overline{\Omega}_{i+1}\cong\mathbb{S}^{m-1}\times\mathbb{S}^{n-1}$ and\quad $\overline{\Omega}_i\cap \overline{\Omega}_j=\emptyset$\, if\, $|j-i|\geq 2$,
\item[$(c_3)$] the function
$$v:=\sum_{i=1}^M(-1)^{i-1}v_{\infty,i}$$
is a $\Gamma$-invariant sign-changing solution to the problem \eqref{eq:1a} with precisely $M$ nodal domains and $v$ has least energy among all such solutions.
\end{itemize}
\end{itemize}
\end{theorem}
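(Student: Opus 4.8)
The plan is to run a phase-separation argument driven entirely by energy estimates, using the compact embedding of Lemma \ref{lem:compactness} and the one-dimensional orbit structure (Proposition \ref{prop:continuity}) to turn weak limits into genuine nodal domains, and to invoke the $1$-D reduction of Lemma \ref{lem:isometry2} only at the end for the topological statement $(c)$. First I would record the uniform bound. Writing $c_k:=\mathcal{J}(v_k)=\frac1N\sum_i\|v_{k,i}\|^2$ for the least system energy at level $\lambda_{ij,k}$, the comparison \eqref{eq:comparison} gives $c_k\le\ell:=\inf_{\mathcal{P}_M^\Gamma}\sum_i c^\Gamma_{\Omega_i}$ for every $k$, so $\|v_{k,i}\|$ is bounded and, up to a subsequence, $v_{k,i}\rightharpoonup v_{\infty,i}$ in $D^{1,2}(\R^N)^\Gamma$ and, by Lemma \ref{lem:compactness}, $v_{k,i}\to v_{\infty,i}$ strongly in $L^{2^*}(\R^N)$ with $v_{\infty,i}\ge 0$. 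Since $\lambda_{ij,k}<0$, the Nehari identity $\|v_{k,i}\|^2=|v_{k,i}|_{2^*}^{2^*}+\sum_{j\ne i}\lambda_{ij,k}\beta_{ij}\int|v_{k,j}|^{\alpha_{ij}}|v_{k,i}|^{\beta_{ij}}$ gives $\|v_{k,i}\|^2\le|v_{k,i}|_{2^*}^{2^*}$, whence Sobolev yields a uniform positive lower bound on $\|v_{k,i}\|$ and therefore $v_{\infty,i}\neq 0$. The same identity shows each nonnegative term $(-\lambda_{ij,k})\beta_{ij}\int|v_{k,j}|^{\alpha_{ij}}|v_{k,i}|^{\beta_{ij}}$ is bounded by $|v_{k,i}|_{2^*}^{2^*}$; as $-\lambda_{ij,k}\to\infty$ the interaction integrals tend to $0$, and strong $L^{2^*}$ convergence gives the phase separation $v_{\infty,i}v_{\infty,j}=0$ a.e.\ for $i\ne j$.

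Next I would extract the limiting partition. Transporting Proposition \ref{prop:continuity} to $\R^N$ through the isometries of Lemmas \ref{lem:isometry1}--\ref{lem:isometry2}, each $v_{\infty,i}$ has a representative continuous off the image of the exceptional set $Z$, so $\Omega_i:=\{v_{\infty,i}>0\}$ is open, $\Gamma$-invariant and nonempty, the $\Omega_i$ are pairwise disjoint, and $v_{\infty,i}\in D^{1,2}_0(\Omega_i)^\Gamma$. Passing to the limit in the $i$-th equation tested against nonnegative $\varphi\in\mathcal{C}_c^\infty(\Omega_i)^\Gamma$ and using the sign of the coupling gives $-\Delta v_{\infty,i}\le |v_{\infty,i}|^{2^*-2}v_{\infty,i}$ in $\Omega_i$, hence $\|v_{\infty,i}\|^2\le|v_{\infty,i}|_{2^*}^{2^*}$, so the Nehari rescaling $t_i$ with $t_iv_{\infty,i}\in\mathcal{M}^\Gamma_{\Omega_i}$ satisfies $t_i\le 1$. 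Combining this with weak lower semicontinuity yields the chain
\[
\ell\le\sum_i c^\Gamma_{\Omega_i}\le\sum_i J_{\Omega_i}(t_iv_{\infty,i})=\tfrac1N\sum_i t_i^2\|v_{\infty,i}\|^2\le\tfrac1N\sum_i\|v_{\infty,i}\|^2\le\liminf_k c_k\le\ell,
\]
so every inequality is an equality. This forces $t_i=1$ and $J_{\Omega_i}(v_{\infty,i})=c^\Gamma_{\Omega_i}$ (so $v_{\infty,i}|_{\Omega_i}$ is a least energy solution of \eqref{eq:3a}), it forces $\sum_i c^\Gamma_{\Omega_i}=\ell$ (the partition is optimal), and it forces $\frac1N\sum_i\|v_{\infty,i}\|^2=\lim_k c_k$; the latter together with weak lower semicontinuity of each norm gives $\|v_{k,i}\|\to\|v_{\infty,i}\|$ and hence strong $D^{1,2}$ convergence. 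This proves $(a)$ and $(b)$, and elliptic regularity for the equation on the smooth manifold $\S^N$ promotes each limit profile to a continuous function, even across the exceptional orbits where the coordinate $w$ is misleadingly singular.

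For $(c)$ I would pass to the orbit space, writing $v_{\infty,i}=w_i\circ q\circ\sigma^{-1}$ via Lemma \ref{lem:isometry2}, so that the disjoint supports of the $w_i$ are subsets of $[0,\pi]$. A least energy solution cannot be positive on two components of its domain (restricting to one component would strictly lower the energy while remaining a solution), so each $\Omega_i$ is connected, i.e.\ each $\mathrm{supp}\,w_i$ is a single interval; monotonicity of $\Omega\mapsto c^\Gamma_\Omega$ together with optimality rules out gaps between consecutive intervals, giving $\overline{\Omega_1\cup\cdots\cup\Omega_M}=\R^N$ and, after reordering, values $0=t_0<t_1<\cdots<t_M=\pi$ with $\Omega_i=\sigma(q^{-1}((t_{i-1},t_i)))$. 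The descriptions in $(c_1)$--$(c_2)$ are then read off from $q^{-1}(0)\cong\S^{m-1}$, $q^{-1}(t)\cong\S^{m-1}\times\S^{n-1}$, $q^{-1}(\pi)\cong\S^{n-1}$ recorded after \eqref{eq:q}. To see that $v=\sum_i(-1)^{i-1}v_{\infty,i}$ solves \eqref{eq:1a}, note that $v\in D^{1,2}(\R^N)^\Gamma$ already solves the equation away from the interfaces $\bigcup_i q^{-1}(t_i)$, so it suffices to rule out a singular part of $\Delta v$ on the interfaces; in the $1$-D reduction this is exactly the matching $|w_i'(t_i^-)|=w_{i+1}'(t_i^+)$ at each $t_i$, which I would obtain from optimality by a domain-variation argument: perturbing $t_i$ and differentiating $c^\Gamma_{\Omega_i}+c^\Gamma_{\Omega_{i+1}}$ at its minimum produces a Pohozaev/Rellich boundary term proportional to $(w_i'(t_i^-))^2-(w_{i+1}'(t_i^+))^2$, whose vanishing is precisely the required matching. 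With the alternating signs this makes $v$ a weak, hence classical, sign-changing solution with exactly the $M$ nodal domains $\Omega_i$, of energy $\sum_i c^\Gamma_{\Omega_i}=\ell$, which by \eqref{eq:comparison} is minimal among $\Gamma$-invariant solutions with at least $M$ nodal domains.

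I expect the main obstacle to be the final step of $(c)$: upgrading the family of disjoint least energy profiles into a single solution of the \emph{uncoupled} equation. The disjointness and the energy identities are inexpensive once phase separation is established, but the interface flux-matching---the assertion that the profiles glue with alternating signs into a $C^1$ solution---is the genuinely delicate point. It is here that the one-dimensionality of the orbit space is indispensable, since it reduces the gluing to an ODE derivative-matching that would be unavailable in the general, non-symmetric setting.
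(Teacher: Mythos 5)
Your treatment of $(a)$ and $(b)$ is, in substance, the paper's own proof of Theorem \ref{thm:phase_separation}: the uniform bound via \eqref{eq:comparison} (the paper compares with $\mathcal{N}_0^\Gamma$, which amounts to the same), Fatou's lemma for the vanishing of the interaction terms, the Sobolev lower bound for nontriviality of the limits, and the chain of inequalities through the Nehari rescalings $t_i\le 1$ that forces equality everywhere, hence strong convergence and optimality of the partition. You do, however, gloss over a step the paper treats explicitly: Proposition \ref{prop:continuity} gives continuity of $v_{\infty,i}$ only off the exceptional set $Y=(\mathbb{S}^{m-1}\times\{0\})\cup(\{0\}\times\mathbb{R}^{n-1})$, so $\{v_{\infty,i}>0\}$ is a priori open only away from $Y$, and both the membership $v_{\infty,i}|_{\Omega_i}\in\mathcal{M}^\Gamma_{\Omega_i}$ and the identity $c^\Gamma_{\Omega_i}=c^\Gamma_{\Omega_i\smallsetminus Y}$ rest on the fact that $Y$ has capacity zero (codimension at least $2$; see Theorem \ref{thm:partition}$(ii)$). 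Your appeal to ``elliptic regularity on $\S^N$'' for continuity across the exceptional orbits does not work as stated, since $v_{\infty,i}$ solves an equation only on its own domain; in the paper, global continuity follows from boundary regularity on the smooth domain $\Omega_i$ \emph{after} the exceptional orbit has been absorbed by the capacity argument.

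The genuine gap is in $(c_3)$, exactly where you yourself flagged the difficulty: the interface flux-matching is asserted, not proved, and it is not the paper's route. Differentiating $t_i\mapsto c^\Gamma_{\Omega_i}+c^\Gamma_{\Omega_{i+1}}$ in the interface parameter presupposes a Hadamard-type domain derivative of the least-energy level; since least-energy solutions of the critical problem need not be unique, this map is not known to be differentiable, and a rigorous version must compare one-sided derivatives computed via Pohozaev identities for possibly different minimizers --- a substantial argument your sketch does not supply, and without which the matching $|w_i'(t_i^-)|=w_{i+1}'(t_i^+)$, hence the claim that $v$ is a solution, is unsupported. The paper avoids derivative matching altogether: for $\Phi_i:=\mathrm{int}\,(\overline{\Omega}_i\cup\overline{\Omega}_{i+1})$ it minimizes $J_{\Phi_i}$ over the nodal Nehari set $\mathcal{E}^\Gamma_{\Phi_i}=\{w:w^\pm\in\mathcal{M}^\Gamma_{\Phi_i}\}$, where the infimum $d^\Gamma_{\Phi_i}$ is attained at a sign-changing solution $\widehat{w}_i$ thanks to the Palais--Smale condition coming from Lemma \ref{lem:compactness} and the argument of \cite{ccn}; if $\widetilde{w}_i:=(-1)^{i-1}w_i+(-1)^iw_{i+1}$ had $J_{\Phi_i}(\widetilde{w}_i)>d^\Gamma_{\Phi_i}$, then the two nodal domains of $\widehat{w}_i$ together with the remaining $\Omega_j$ would form an $M$-partition strictly beating the optimal one. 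Hence $\widetilde{w}_i$ itself attains $d^\Gamma_{\Phi_i}$, is therefore a solution on $\Phi_i$, and is $\mathcal{C}^2$ across the interface --- so the $\mathcal{C}^1$ gluing is an output of optimality, not an input to be verified by an ODE computation. Finally, your appeal to \eqref{eq:comparison} for the minimality of $J(v)$ is off target: the correct argument is that the nodal domains $\Omega_1',\ldots,\Omega_M'$ of any competing $\Gamma$-invariant $M$-nodal solution $v'$ form an element of $\mathcal{P}^\Gamma_M$ with $v'|_{\Omega_i'}\in\mathcal{M}^\Gamma_{\Omega_i'}$, whence $J(v')\geq\sum_{i}c^\Gamma_{\Omega_i'}\geq\sum_{i}c^\Gamma_{\Omega_i}=J(v)$.
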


We prove this result in the following section.

\begin{figure}[ht]
\begin{center}
\includegraphics[width=.35\textwidth]{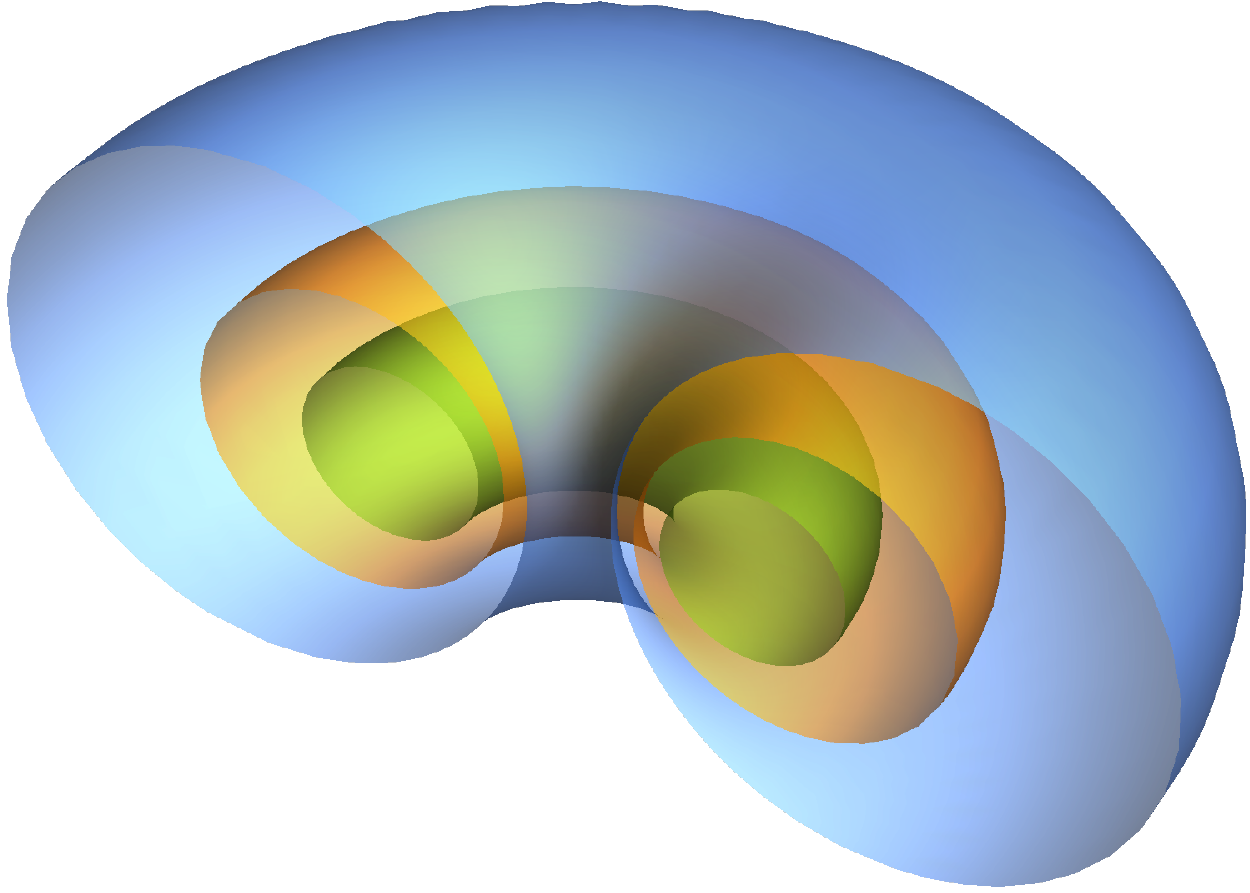}
\caption{\small{Transversal cut illustrating the optimal partition $\{\Omega_1,\ldots,\Omega_4\}$ of $\mathbb{R}^3$ given by Theorem \ref{thm:main_RN}. $\Omega_1$ is the interior of the innermost torus, $\Omega_2$ and $\Omega_3$ are the domains between two consecutive tori, and $\Omega_4$ is the exterior of the outermost torus.}}
\end{center}
\end{figure}

\section{The proof of the main result} \label{sec:proof}

Theorem \ref{thm:main_RN} follows from the next two theorems, which are of independent interest. Let
$$\widetilde{q}:=q\circ\sigma^{-1}:\mathbb{R}^N\to[0,\pi],$$
where $\sigma$ is the stereographic projection and $q$ is the $\Gamma$-orbit map of $\S^N$ defined in \eqref{eq:q}. Writing $\rn=\r^m\times\r^{n-1}$, it is easy to see that $\widetilde q^{\,-1}(0) = \mathbb{S}^{m-1}\times\{0\}$ and $\widetilde q^{\,-1}(\pi) = \{0\}\times \r^{n-1}$.

\begin{theorem} \label{thm:partition}
Let $\{\Theta_1,\ldots,\Theta_M\}\in\mathcal{P}_M^\Gamma$ be a solution to the optimal $M$-partition problem \eqref{eq:4a}. Then, the following statements hold true.
\begin{itemize}
\item[$(i)$] There exist $a_1,\ldots,a_{M-1}\in(0,\pi)$ such that
$$(0,\pi)\smallsetminus\bigcup_{i=1}^M\widetilde{q}\,(\Theta_i)=\{a_1,\ldots,a_{M-1}\}.$$
Therefore, after reordering,
\begin{align*}
\Theta_1\cup(\mathbb{S}^{m-1}\times\{0\})&=\widetilde{q}\,^{-1}[0,a_1),\\
\Theta_i&=\widetilde{q}\,^{-1}(a_{i-1},a_i)\qquad\text{if }\; i=2,\ldots,M-1,\\
\Theta_M\cup(\{0\}\times\mathbb{R}^{n-1})&=\widetilde{q}\,^{-1}(a_{M-1},\pi].
\end{align*}
\item[$(ii)$] Set $\Omega_1 :=\Theta_1\cup(\mathbb{S}^{m-1}\times\{0\})$, $\Omega_M:=\Theta_M\cup(\{0\}\times\mathbb{R}^{n-1})$, and $\Omega_i:=\Theta_i$ otherwise. Then, $\Omega_1,\ldots,\Omega_M$ are smooth and connected, they satisfy $(c_1)$ and $(c_2)$ of \emph{Theorem} \ref{thm:main_RN}, $\Omega_1,\ldots,\Omega_{M-1}$ are bounded, $\Omega_M$ is unbounded, $\overline{\Omega_1\cup\cdots\cup \Omega_M}=\mathbb{R}^{N}$, and $\{\Omega_1,\ldots,\Omega_M\}\in\mathcal{P}_M^\Gamma$ is a solution to the optimal $M$-partition problem \eqref{eq:4a}.
\item[$(iii)$] If $w_i\in\mathcal{M}_{\Omega_i}^\Gamma$ satisfies $w_i\geq 0$ and $J_{\Omega_i}(w_i)=c_{\Omega_i}^\Gamma:=\inf_{\mathcal{M}^\Gamma_{\Omega_i}}J_{\Omega_i}$, then
$$w:=\sum_{i=1}^M(-1)^{i-1}w_i$$
is a $\Gamma$-invariant sign-changing solution to the problem \eqref{eq:1a} with precisely $M$ nodal domains and $w$ has minimal energy among all such solutions.
\end{itemize}
\end{theorem}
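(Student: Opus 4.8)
The plan is to exploit the one-dimensionality of the $\Gamma$-orbit space throughout: by Lemma~\ref{lem:isometry2} every quantity reduces to a weighted one-variable problem on $[0,\pi]$, and the compact embedding of Lemma~\ref{lem:compactness} guarantees that each least energy $c_\Omega^\Gamma$ is attained by a positive ground state and is \emph{strictly} monotone under domain inclusion. I will write $c(I)$ for the corresponding weighted $1$D least energy on an open set $I\subset(0,\pi)$.

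For $(i)$: since each $\Theta_i$ is $\Gamma$-invariant and open and $\widetilde q$ is a quotient map, $\Theta_i$ is $\widetilde q$-saturated and the sets $G_i:=\widetilde q(\Theta_i)$ are pairwise disjoint and open in $[0,\pi]$. I use two facts: (a) strict domain monotonicity, $c(I)>c(I')$ whenever $I\subsetneq I'$ have a difference with nonempty interior; and (b) for a disconnected set the least energy equals the minimum over its components, $c(A\sqcup B)=\min\{c(A),c(B)\}$, because a positive ground state lives on a single component. First I show $G:=\bigcup_i G_i$ is dense in $(0,\pi)$: a nondegenerate gap of the complement is adjacent to some $G_i$ and could be annexed to it, strictly lowering $\sum_i c(G_i)$ by (a) and contradicting optimality. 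Next I show each $G_i$ is a single interval: a non-dominant component may be detached from $G_i$ without changing $c(G_i)$ by (b) and merged into a neighbor, strictly lowering the neighbor's energy by (a), again contradicting optimality. Hence the $G_i$ are $M$ disjoint open intervals with dense union in $(0,\pi)$; ordering them forces $G_1=(0,a_1),\dots,G_M=(a_{M-1},\pi)$ with complement exactly $\{a_1,\dots,a_{M-1}\}$, which is $(i)$.

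For $(ii)$: the exceptional orbits $\mathbb{S}^{m-1}\times\{0\}=\widetilde q^{\,-1}(0)$ and $\{0\}\times\mathbb{R}^{n-1}=\widetilde q^{\,-1}(\pi)$ have zero capacity, so adjoining them to $\Theta_1$ and $\Theta_M$ produces open sets $\Omega_1,\Omega_M$ with $c_{\Omega_1}^\Gamma=c_{\Theta_1}^\Gamma$ and $c_{\Omega_M}^\Gamma=c_{\Theta_M}^\Gamma$; thus $\{\Omega_1,\dots,\Omega_M\}\in\mathcal{P}_M^\Gamma$ remains optimal. The diffeomorphism types in $(c_1)$ and the interface description in $(c_2)$ follow from the explicit preimages $\widetilde q^{\,-1}[0,a_1)\cong\mathbb{S}^{m-1}\times\mathbb{B}^n$, $\widetilde q^{\,-1}(a_{i-1},a_i)\cong\mathbb{S}^{m-1}\times\mathbb{S}^{n-1}\times(0,1)$, and $\widetilde q^{\,-1}(a_{M-1},\pi]\cup\{\infty\}\cong\mathbb{B}^m\times\mathbb{S}^{n-1}$, the boundaries being smooth because $f=|z_1|^2-|z_2|^2$ is a submersion off $Z$; connectedness and (un)boundedness are read off directly.

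For $(iii)$: under $\mathscr J^{-1}$ each $w_i$ corresponds to $\omega_i\in H^1_h(a_{i-1},a_i)$, the positive ground state of the weighted Yamabe ODE, vanishing at the endpoints and positive inside, so $w=\sum_i(-1)^{i-1}w_i$ has exactly the $M$ nodal domains $\Omega_1,\dots,\Omega_M$. The decisive point is that $w$ solves \eqref{eq:1a} across each interface $\widetilde q^{\,-1}(a_i)$: the separating points minimize $\mathcal{E}(a_1,\dots,a_{M-1})=\sum_i c((a_{i-1},a_i))$, and a Hadamard-type domain-variation formula expresses $\partial_{a_i}\mathcal{E}$ as the difference of the one-sided weighted squared boundary slopes of $\omega_i$ and $\omega_{i+1}$ at $a_i$; the first-order condition $\partial_{a_i}\mathcal{E}=0$ then forces $|\omega_i'(a_i)|=|\omega_{i+1}'(a_i)|$, which together with the alternating signs makes $w$ of class $\mathcal{C}^1$ across each interface and hence a weak, thus classical, solution. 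Finally, for minimality, given any $\Gamma$-invariant sign-changing solution $\tilde w$ with precisely $M$ nodal domains, Proposition~\ref{prop:continuity} lets me represent $\tilde w$ through a continuous $1$D profile, so its sign intervals have $\widetilde q$-saturated, hence $\Gamma$-invariant, preimages forming some $\{\tilde\Omega_j\}\in\mathcal{P}_M^\Gamma$; since each restriction lies on $\mathcal{M}_{\tilde\Omega_j}^\Gamma$, the energy of $\tilde w$ equals $\sum_j J_{\tilde\Omega_j}(\tilde w|_{\tilde\Omega_j})\ge\sum_j c_{\tilde\Omega_j}^\Gamma$, which is at least the optimal partition value $\sum_i c_{\Omega_i}^\Gamma$, the energy of $w$. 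The derivative-matching step in $(iii)$ is the main obstacle, since it is precisely where optimality of the partition must be converted into the absence of a flux jump at the interfaces.
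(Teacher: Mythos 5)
Your parts $(i)$ and $(ii)$ are essentially the paper's own argument: the engine is the strict inequality $c^\Gamma_{\widetilde q^{\,-1}(a,c)}<\min\{c^\Gamma_{\widetilde q^{\,-1}(a,b)},c^\Gamma_{\widetilde q^{\,-1}(b,c)}\}$ for $a<b<c$ (the paper states exactly this, using that minimizers exist by Lemma \ref{lem:compactness} and are positive), plus the observation that $\mathbb{S}^{m-1}\times\{0\}$ and $\{0\}\times\mathbb{R}^{n-1}$ have capacity zero so that $c^\Gamma_{\Omega_i}=c^\Gamma_{\Theta_i}$. One caveat in your $(i)$: your blanket claim (a), strict monotonicity $c(I)>c(I')$ whenever $I\subsetneq I'$ with the difference having interior, is false when the added set is a separate component, since then $c(I')=\min\{c(I),c(\text{new component})\}$, which may equal $c(I)$. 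Consequently, in your ``annex the gap'' step the sum decreases strictly only if the gap is adjacent to the component of $G_i$ on which $c(G_i)$ is attained. This is repairable (first detach non-dominant components at zero cost using your (b), then enlarge the dominant interval into the freed region and the gaps, which is strict by the splitting inequality), but in the order you present it the density step has a hole.

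The substantive gap is in $(iii)$, precisely where you flag it. Your route needs the map $a_i\mapsto c\bigl((a_{i-1},a_i)\bigr)+c\bigl((a_i,a_{i+1})\bigr)$ to be differentiable, with a Hadamard boundary-flux formula evaluated at the \emph{given} minimizers $w_i$. Neither is available for free: ground states of the critical reduced problem are not known to be unique, so the value function is a priori only continuous and monotone, and Danskin-type theorems yield only one-sided derivatives expressed as extrema of the boundary flux over the whole set of minimizers. The first-order condition at the optimal $a_i$ then gives $|\omega_i'(a_i)|=|\omega_{i+1}'(a_i)|$ only for suitably extremal minimizers, whereas statement $(iii)$ asserts the conclusion for an \emph{arbitrary} choice of $w_i\in\mathcal{M}^\Gamma_{\Omega_i}$ attaining $c^\Gamma_{\Omega_i}$. (One can likely close this by deriving $\sup\leq\inf$ of the fluxes in both directions, forcing all fluxes to agree, but none of this is in your proposal.) The paper sidesteps the issue entirely: it sets $\Phi_i:=\mathrm{int}\,(\overline{\Omega}_i\cup\overline{\Omega}_{i+1})$, uses the Palais--Smale condition from Lemma \ref{lem:compactness} and the argument of \cite{ccn} to show that $d^\Gamma_{\Phi_i}:=\inf_{\mathcal{E}^\Gamma_{\Phi_i}}J_{\Phi_i}$ over the nodal Nehari set is attained and that every minimizer there solves the equation, and then proves that $\widetilde w_i:=(-1)^{i-1}w_i+(-1)^iw_{i+1}$ attains $d^\Gamma_{\Phi_i}$ by pure energy comparison: if $J_{\Phi_i}(\widetilde w_i)>d^\Gamma_{\Phi_i}$, the positivity and negativity sets of a nodal minimizer $\widehat w_i$ would replace $\{\Omega_i,\Omega_{i+1}\}$ and strictly lower the total partition energy, contradicting optimality. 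This yields that $w$ solves the equation across each interface for arbitrary minimizers, with no differentiability of value functions, no flux formula, and no uniqueness hypothesis. Your final minimality argument coincides with the paper's.
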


\begin{proof}
$(i):$ Note that Lemma \ref{lem:compactness} implies, by a standard argument, that $c_{\Omega}^\Gamma:=\inf_{\mathcal{M}^\Gamma_{\Omega}}J_{\Omega}$ is attained for any $\Gamma$-invariant smooth open subset $\Omega$ of $\mathbb{R}^N$ and we may assume the minimizer is strictly positive in $\Omega$.

Let $a,b,c\in(0,\pi)$ with $a<b<c$ and set $\Lambda_1:=\widetilde{q}\,^{-1}(a,b)$,\; $\Lambda_2:=\widetilde{q}\,^{-1}(b,c)$,\; $\Lambda=\widetilde{q}\,^{-1}(a,c)$. Then,
$$c_{\Lambda}^\Gamma < \min\{c_{\Lambda_1}^\Gamma,c_{\Lambda_2}^\Gamma\}.$$
Therefore, if $\{\Theta_1,\ldots,\Theta_M\}\in\mathcal{P}_M^\Gamma$ is a solution to the optimal $M$-partition problem \eqref{eq:4a}, then $(0,\pi)\smallsetminus\bigcup_{i=1}^M\widetilde{q}\,(\Theta)$ must consist of precisely $M-1$ points.

$(ii):$ Clearly, $\Omega_1,\ldots,\Omega_M$ are smooth and connected, they satisfy $(c_1)$ and $(c_2)$ of Theorem \ref{thm:main_RN}, $\Omega_1,\ldots,\Omega_{M-1}$ are bounded, $\Omega_M$ is unbounded, $\mathbb{R}^{N}=\overline{\Omega_1\cup\cdots\cup \Omega_M}$, and $\{\Omega_1,\ldots,\Omega_M\}\in\mathcal{P}_M^\Gamma$. As $m+(n-1)=N$ and $m,n\geq 2$, the codimension of \;$\mathbb{S}^{m-1}\times\{0\}$\; and \;$\{0\}\times\mathbb{R}^{n-1}$\; in $\mathbb{R}^N$ is at least $2$, so each one of these sets has capacity $0$ in $\mathbb{R}^N$; see \cite[Section 4.7]{eg}. Hence, $D_0^{1,2}(\Omega_i)=D_0^{1,2}(\Theta_i)$ and $c_{\Omega_i}^\Gamma=c_{\Theta_i}^\Gamma$.

$(iii):$ For each $i=1,\ldots,M-1$, let $\Phi_i$ be the interior of the set $\overline{\Omega}_i\cup \overline{\Omega}_{i+1}$. This is a $\Gamma$-invariant smooth domain in $\mathbb{R}^N$. Let $J_{\Phi_i}$ and $\mathcal{M}_{\Phi_i}^\Gamma$ be the energy functional and the Nehari manifold associated to the problem
\begin{equation} \label{eq:3b}
-\Delta w = |w|^{2^*-2}w, \qquad w\in D^{1,2}_0(\Phi_i)^\Gamma,
\end{equation}
see Section \ref{sec:euclidean}. The sign-changing solutions to \eqref{eq:3b} belong to the set
$$\mathcal{E}_{\Phi_i}^\Gamma:=\{w\in D^{1,2}_0(\Phi_i)^\Gamma:w^+\in\mathcal{M}_{\Phi_i}^\Gamma,\;w^-\in\mathcal{M}_{\Phi_i}^\Gamma\},$$
where $w^+:=\max\{w,0\}$ and $w^-:=\min\{w,0\}$. Lemma \ref{lem:compactness} implies that $J_{\Phi_i}$ satisfies the Palais-Smale condition on $\mathcal{M}_{\Phi_i}^\Gamma$. So, arguing as in \cite{ccn}, we see that every minimizer of $J_{\Phi_i}$ on $\mathcal{E}_{\Phi_i}^\Gamma$ is a solution to \eqref{eq:3b} and that $d_{\Phi_i}^\Gamma:=\inf_{\mathcal{E}_{\Phi_i}^\Gamma}J_{\Phi_i}$ is attained at some function $\widehat{w}_i\in\mathcal{E}_{\Phi_i}^\Gamma$. Setting 
$$\Phi_i^+:=\{x\in\Phi_i:\widehat{w}_i>0\}\qquad\text{and}\qquad\Phi_i^-:=\{x\in\Phi_i:\widehat{w}_i<0\},$$ we have that $\{\Omega_j:j\neq i,i+1\}\cup\{\Phi_i^+,\Phi_i^-\}\in\mathcal{P}_M^\Gamma$. 

Let $w_i\in\mathcal{M}_{\Omega_i}^\Gamma$ satisfy $w_i\geq 0$ and $J_{\Omega_i}(w_i)=c_{\Omega_i}^\Gamma$. Then, as $\Omega_i\cap\Omega_{i+1}=\emptyset$, we have that
$$\widetilde{w}_i:=(-1)^{i-1}w_i+(-1)^iw_{i+1}\in\mathcal{E}_{\Phi_i}^\Gamma.$$
We claim that $J_{\Phi_i}(\widetilde{w}_i)=d_{\Phi_i}^\Gamma$. Otherwise, since $c_{\Phi_i^\pm}^\Gamma \le J_{\Phi_i^\pm}(\widehat w^\pm)$,
$$c_{\Omega_i}^\Gamma+c_{\Omega_{i+1}}^\Gamma=J_{\Phi_i}(\widetilde{w}_i)>d_{\Phi_i}^\Gamma\geq c_{\Phi_i^+}^\Gamma + c_{\Phi_i^-}^\Gamma,$$
contradicting the fact that $\{\Omega_1,\ldots,\Omega_M\}$ solves the optimal $M$-partition problem \eqref{eq:4a}. Consequently, $J_{\Phi_i}(\widetilde{w}_i)=d_{\Phi_i}^\Gamma$. Since $\widetilde{w}_i$ solves \eqref{eq:3b}, we have that $\widetilde{w}_i\in\mathcal{C}^2(\Phi_i)$. Hence, $\widetilde{w}_i$ is a classical solution to \eqref{eq:3b} for every $i=1,\ldots,M-1$. Therefore, 
$$w=\sum_{i=1}^M(-1)^{i-1}w_i$$
is a classical solution to problem \eqref{eq:1a}. 

Finally, if $v\in D^{1,2}(\mathbb{R}^N)^\Gamma$ is a least energy solution to \eqref{eq:1a} with $M$ nodal domains $\Omega'_1,\ldots,\Omega'_M$, then, as $\{\Omega_1,\ldots,\Omega_M\}$ solves the optimal $M$-partition problem \eqref{eq:4a}, we have that
$$J(v)\geq \sum_{i=1}^Mc_{\Omega'_i}^\Gamma\geq \sum_{i=1}^Mc_{\Omega_i}^\Gamma=J(w).$$
Hence, $w$ has minimal energy.
\end{proof}

\begin{theorem} \label{thm:phase_separation}
For each $i,j=1,\ldots,M$, $i\neq j$, let $(\lambda_{ij,k})$ be a sequence of negative numbers such that $\lambda_{ij,k}\to -\infty$ as $k\to\infty$, and  let $v_{k}=(v_{k,1},\ldots,v_{k,M})$ be a positive least energy fully nontrivial $\Gamma$-invariant solution to the system \eqref{eq:2a} with $\lambda_{ij}=\lambda_{ij,k}$. Then, after passing to a subsequence, we have that
\begin{itemize}
\item[$(a)$]$v_{k,i}\to v_{\infty,i}$ strongly in $D^{1,2}(\mathbb{R}^N)$,\, $v_{\infty,i}\geq 0$,\, $v_{\infty,i}$ is continuous in $\mathbb{R}^N$ and $v_{\infty,i}|_{\Omega_i}$ is a least energy solution to the problem \eqref{eq:3a} in $\Omega_i:=\{x\in\mathbb{R}^{N}:v_{\infty,i}(x)>0\}$, for each $i=1,\ldots,M$.
\item[$(b)$]$\{\Omega_1,\ldots,\Omega_M\}\in\mathcal{P}^\Gamma_M$ and it solves the optimal $M$-partition problem \eqref{eq:4a}.
\end{itemize}
\end{theorem}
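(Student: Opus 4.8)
The plan is to analyze the limit of the least energy fully nontrivial solutions $v_k = (v_{k,1},\ldots,v_{k,M})$ as the coupling strengths $\lambda_{ij,k}\to-\infty$, exploiting the one-dimensionality of the $\Gamma$-orbit space throughout. First I would establish a uniform energy bound: since $v_k$ is a least energy fully nontrivial solution, the comparison inequality \eqref{eq:comparison} together with an upper bound for $\inf_{\mathcal{N}^\Gamma}\mathcal{J}$ (obtained by testing with any fixed admissible partition, which is independent of $k$) gives $\mathcal{J}(v_k) = \frac{1}{N}\sum_i\|v_{k,i}\|^2 \le C$ uniformly in $k$. Hence each $v_{k,i}$ is bounded in $D^{1,2}(\mathbb{R}^N)^\Gamma$, and by Lemma \ref{lem:compactness} the embedding $D^{1,2}(\mathbb{R}^N)^\Gamma\hookrightarrow L^{2^*}(\mathbb{R}^N)$ is compact. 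So, after passing to a subsequence, $v_{k,i}\rightharpoonup v_{\infty,i}$ weakly in $D^{1,2}(\mathbb{R}^N)^\Gamma$ and strongly in $L^{2^*}(\mathbb{R}^N)$, with $v_{\infty,i}\ge 0$.

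Next I would extract the phase separation. The key quantity is the competition term: testing the $i$-th equation in \eqref{eq:2a} against $v_{k,i}$ and using $\lambda_{ij,k}<0$ shows that $\sum_{j\ne i}|\lambda_{ij,k}|\beta_{ij}\int_{\mathbb{R}^N}|v_{k,j}|^{\alpha_{ij}}|v_{k,i}|^{\beta_{ij}} \le \|v_{k,i}\|^2 \le C$. Since $|\lambda_{ij,k}|\to\infty$, this forces $\int_{\mathbb{R}^N}|v_{k,j}|^{\alpha_{ij}}|v_{k,i}|^{\beta_{ij}}\to 0$ for all $i\ne j$; passing to the $L^{2^*}$-strong limit yields $v_{\infty,i}\,v_{\infty,j}=0$ a.e.\ for $i\ne j$, which is the segregation of the limit profiles. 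The crucial gain from the symmetry is that Proposition \ref{prop:continuity} provides a continuous representative $\bar v_{\infty,i}$ on $\mathbb{R}^N$ (off the two exceptional orbits, whose images under $\widetilde q$ are the endpoints $0,\pi$, and which have capacity zero), so that $\Omega_i:=\{x:v_{\infty,i}(x)>0\}$ is genuinely open and the supports are disjoint open sets. This establishes $\{\Omega_1,\ldots,\Omega_M\}\in\mathcal{P}_M^\Gamma$.

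I would then upgrade weak to strong convergence and identify the limit as a least energy solution on each $\Omega_i$. Passing to the limit in the weak formulation, using that the competition terms vanish and that $v_{\infty,i}v_{\infty,j}=0$, shows that each $v_{\infty,i}$ is a (weak) solution of \eqref{eq:3a} on $\Omega_i$, so $v_{\infty,i}\in\mathcal{M}_{\Omega_i}^\Gamma$; in particular $\|v_{\infty,i}\|^2 = |v_{\infty,i}|_{2^*}^{2^*}$. Combined with the strong $L^{2^*}$-convergence and weak lower semicontinuity of the norm, a standard argument forces $\|v_{k,i}\|\to\|v_{\infty,i}\|$, hence strong $D^{1,2}$-convergence and $v_{\infty,i}\ne 0$ (the limits are fully nontrivial because the energy of each component stays bounded below away from zero by the least energy of the limiting equation). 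For optimality in part $(b)$: on one hand, strong convergence gives $\sum_i J_{\Omega_i}(v_{\infty,i}) = \lim_k \mathcal{J}(v_k) = \lim_k\inf_{\mathcal{N}^\Gamma}\mathcal{J} \le \inf_{\mathcal{P}_M^\Gamma}\sum_i c_{\Omega'_i}^\Gamma$ by \eqref{eq:comparison}; on the other hand $\sum_i J_{\Omega_i}(v_{\infty,i}) \ge \sum_i c_{\Omega_i}^\Gamma \ge \inf_{\mathcal{P}_M^\Gamma}\sum_i c_{\Omega'_i}^\Gamma$. Equality throughout shows each $v_{\infty,i}|_{\Omega_i}$ realizes $c_{\Omega_i}^\Gamma$ and that $\{\Omega_1,\ldots,\Omega_M\}$ solves \eqref{eq:4a}.

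The main obstacle is the lower bound needed to guarantee the limit is \emph{fully} nontrivial and that no energy escapes in the limit — equivalently, ruling out that some component $v_{\infty,i}$ vanishes or that the partition degenerates to fewer than $M$ pieces. In the general (non-symmetric) setting this is exactly where uniform Hölder estimates for segregating systems are required; here the one-dimensionality of the orbit space, via the isometry $\mathscr{J}$ of Lemma \ref{lem:isometry2} and the continuity of Proposition \ref{prop:continuity}, is what replaces that machinery and keeps the limit profiles continuous and genuinely supported on $M$ distinct intervals in the orbit variable $t\in(0,\pi)$. I expect the careful bookkeeping of this nondegeneracy — showing $\liminf_k\|v_{k,i}\|>0$ uniformly and that the separation produces exactly $M$ nontrivial supports rather than collapsing — to be the delicate point, with everything else following from the compactness and the energy identities.
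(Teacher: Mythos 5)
Your overall architecture tracks the paper's (uniform energy bound via \eqref{eq:comparison}, compactness from Lemma \ref{lem:compactness}, the estimate $\int_{\mathbb{R}^N}|v_{k,j}|^{\alpha_{ij}}|v_{k,i}|^{\beta_{ij}}\le C/(-\lambda_{ij,k})$ forcing segregation, continuity off the exceptional orbits via Proposition \ref{prop:continuity}, removal of the capacity-zero set $Y$, and a final energy sandwich), but there is a genuine gap at your pivotal step: the claim that ``passing to the limit in the weak formulation, using that the competition terms vanish'' shows that $v_{\infty,i}$ solves \eqref{eq:3a} on $\Omega_i$, whence $\|v_{\infty,i}\|^2=|v_{\infty,i}|_{2^*}^{2^*}$. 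The competition terms do \emph{not} obviously vanish: tested against $\varphi$, the coupling term is $\lambda_{ij,k}\beta_{ij}\int_{\mathbb{R}^N}|v_{k,j}|^{\alpha_{ij}}|v_{k,i}|^{\beta_{ij}-2}v_{k,i}\varphi$, and all you know is that the companion integral is $O(1/|\lambda_{ij,k}|)$, so the product is of the form $(\text{large})\times(\text{small})$ --- bounded but indeterminate. This is precisely the classical obstruction in segregation problems, normally overcome only via uniform H\"older estimates as in \cite{nttv,tt2}. The paper never passes to the limit in the equation. It argues purely variationally: weak lower semicontinuity together with $\|v_{k,i}\|^2\le|v_{k,i}|_{2^*}^{2^*}$ (which holds since $v_k\in\mathcal{N}_k^\Gamma$ and $\lambda_{ij,k}<0$) gives only the \emph{inequality} $\|v_{\infty,i}\|^2\le|v_{\infty,i}|_{2^*}^{2^*}$; one then projects, choosing $t_i\in(0,1]$ so that $(t_1v_{\infty,1},\ldots,t_Mv_{\infty,M})$ lies in the segregated Nehari set $\mathcal{N}_0^\Gamma$ (tuples with $\|v_i\|^2=|v_i|_{2^*}^{2^*}$ and $v_iv_j=0$ a.e.), which is contained in every $\mathcal{N}_k^\Gamma$, so that $c_k^\Gamma\le c_0^\Gamma:=\inf_{\mathcal{N}_0^\Gamma}\frac{1}{N}\sum_i\|v_i\|^2$. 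The sandwich $c_0^\Gamma\le\frac{1}{N}\sum_i\|t_iv_{\infty,i}\|^2\le\frac{1}{N}\liminf_k\sum_i\|v_{k,i}\|^2=\liminf_k c_k^\Gamma\le c_0^\Gamma$ then forces $t_i=1$, strong convergence, the Nehari constraint, and $\lim_k c_k^\Gamma=\frac{1}{N}\sum_i\|v_{\infty,i}\|^2$ simultaneously; minimality on the Nehari manifold, not a limit passage in the PDE, is what makes $v_{\infty,i}|_{\Omega_i}$ a (least energy) solution. Your concluding sandwich is essentially this one, but it cannot get started without the constraint $\|v_{\infty,i}\|^2=|v_{\infty,i}|_{2^*}^{2^*}$, whose only justification in your write-up is the invalid limit passage; the projection trick is the missing idea.

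Two smaller points. The uniform nontriviality that you defer as ``the delicate point'' is in fact a one-line argument you should have closed: from $v_k\in\mathcal{N}_k^\Gamma$ and $\lambda_{ij,k}<0$ one has $\|v_{k,i}\|^2\le|v_{k,i}|_{2^*}^{2^*}$, and Sobolev's inequality $S|v_{k,i}|_{2^*}^2\le\|v_{k,i}\|^2$ yields a $k$-independent bound $0<d_0\le\|v_{k,i}\|^2\le|v_{k,i}|_{2^*}^{2^*}$ (this is \cite[Proposition 3.1]{cs}, which the paper cites), and the lower bound survives the strong $L^{2^*}$ limit, so $v_{\infty,i}\neq 0$; no H\"older machinery is needed for this. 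Finally, Proposition \ref{prop:continuity} only gives continuity of $v_{\infty,i}$ \emph{off} the two exceptional orbits $Y_1=\mathbb{S}^{m-1}\times\{0\}$ and $Y_2=\{0\}\times\mathbb{R}^{n-1}$, so at this stage one only obtains open sets $\Theta_i:=\{x\in\mathbb{R}^N\smallsetminus Y:v_{\infty,i}(x)>0\}$; the continuity on all of $\mathbb{R}^N$ and the identification $\Omega_i=\{x\in\mathbb{R}^N:v_{\infty,i}(x)>0\}$ asserted in $(a)$ require, as in the paper, feeding the optimal partition $\{\Theta_1,\ldots,\Theta_M\}$ back into Theorem \ref{thm:partition} to obtain smooth ordered domains (adjoining $Y_1$ to the innermost piece and $Y_2$ to the outermost) and then invoking regularity up to the boundary for the minimizer on a smooth domain. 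Your proposal skips this last step.
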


\begin{proof}
To highlight the role of $\lambda_{ij,k}$, we write $\mathcal{J}_k$ and $\mathcal{N}^\Gamma_k$ for the functional $\mathcal{J}$ and the set $\mathcal{N}^\Gamma$ associated to the system \eqref{eq:2a} with $\lambda_{ij}=\lambda_{ij,k}$; see Section \ref{sec:euclidean}. By assumption,
$$c_k^\Gamma:= \inf_{\mathcal{N}^\Gamma_k} \mathcal{J}_k =\mathcal{J}_k(v_k)=\frac{1}{N}\sum_{i=1}^M\|v_{k,i}\|^2.$$
We define
\begin{align*}
\mathcal{N}_0^\Gamma:=\{(v_1,\ldots,v_M)\in\mathcal{H}:\,&v_i\neq 0,\;\|v_i\|^2=|v_i|_{2^*}^{2^*},\\
& \text{ and }v_iv_j=0\text{ a.e. in }\mathbb{R}^N \text{ if }i\neq j\}.
\end{align*}
Then, $\mathcal{N}_0^\Gamma\subset\mathcal{N}^\Gamma_k$ for all $k\in\mathbb{N}$ and, consequently, 
$$0<c_k^\Gamma\leq c_0^\Gamma:=\inf\left\{\frac{1}{N}\sum_{i=1}^M\|v_i\|^2:(v_1,\ldots,v_M)\in\mathcal{N}_0^\Gamma\right\}<\infty.$$
So, after passing to a subsequence, using Lemma \ref{lem:compactness} we get that $v_{k,i} \rightharpoonup v_{\infty,i}$ weakly in $D_{0}^{1,2}(\mathbb{R}^N)^\Gamma$, $v_{k,i} \to v_{\infty,i}$ strongly in $L^{2^*}(\mathbb{R}^N)$ and $v_{k,i} \to v_{\infty,i}$ a.e. in $\mathbb{R}^N$, for each $i=1,\ldots,M$. Hence, $v_{\infty,i} \geq 0$. Moreover, as $\partial_i\mathcal{J}_k(v_k)[v_{k,i}]=0$, we have that, for each $j\neq i$,
\begin{align*}
0&\leq\int_{\mathbb{R}^N}\beta_{ij}|v_{k,j}|^{\alpha_{ij}}|v_{k,i}|^{\beta_{ij}}\leq \frac{|v_{k,i}|^{2^*}_{2^*}}{-\lambda_{ij,k}}\leq \frac{C}{-\lambda_{ij,k}}.
\end{align*}
Then, Fatou's lemma yields 
$$0 \leq \int_{\mathbb{R}^N}|v_{\infty,j}|^{\alpha_{ij}}|v_{\infty,i}|^{\beta_{ij}} \leq \liminf_{k \to \infty} \int_{\mathbb{R}^N}|v_{k,j}|^{\alpha_{ij}}|v_{k,i}|^{\beta_{ij}} = 0.$$
Hence, $v_{\infty,j} v_{\infty,i} = 0$ a.e. in $\mathbb{R}^N$. On the other hand, as shown in \cite[Proposition~3.1]{cs}, using Sobolev's inequality we see that
$$0<d_0 \leq \|v_{k,i}\|^2 \leq |v_{k,i}|_{2^*}^{2^*}\qquad\text{for all }k\in\mathbb{N},\;i=1,\ldots,M.$$
So, as $v_{k,i} \to v_{\infty,i}$ strongly in $L^{2^*}(\mathbb{R}^N)$, we conclude that $v_{\infty,i}\neq 0$. And, as $v_{k,i} \rightharpoonup v_{\infty,i}$ weakly in $D^{1,2}(\mathbb{R}^N)$, we get that
\begin{equation} \label{eq:comparison2}
\|v_{\infty,i}\|^2 \leq |v_{\infty,i}|_{2^*}^{2^*}\qquad\text{for all }i=1,\ldots,M.
\end{equation}
Since $v_{\infty,i}\neq 0$, there is a unique $t_i\in(0,\infty)$ such that $\|t_iv_{\infty,i}\|^2 = |t_iv_{\infty,i}|_{2^*}^{2^*}$. Then, $(t_1v_{\infty,1},\ldots,t_Mv_{\infty,M})\in \mathcal{N}_0^\Gamma$. The inequality \eqref{eq:comparison2} implies that $t_i\in (0,1]$. Therefore,
\begin{align*}
c_0^\Gamma &\leq \frac{1}{N}\sum_{i=1}^M\|t_iv_{\infty,i}\|^2 \leq \frac{1}{N}\sum_{i=1}^M\|v_{\infty,i}\|^2\\
&\leq \frac{1}{N}\liminf_{k\to\infty}\sum_{i=1}^M\|v_{k,i}\|^2=\liminf_{k\to\infty} c_k^\Gamma \leq c_0^\Gamma.
\end{align*}
Hence, $v_{k,i} \to v_{\infty,i}$ strongly in $D^{1,2}(\mathbb{R}^N)^\Gamma$,\; $t_i=1$, yielding 
\begin{equation}\label{eq:limit}
\|v_{\infty,i}\|^2 = |v_{\infty,i}|_{2^*}^{2^*},\qquad\text{and}\qquad\frac{1}{N}\sum_{i=1}^M\|v_{\infty,i}\|^2 = \lim_{k\to\infty} c_k^\Gamma.
\end{equation}

Set $Y_1:=\mathbb{S}^{m-1}\times\{0\}$, $Y_2:=\{0\}\times\mathbb{R}^{n-1}$, and $Y:=Y_1\cup Y_2$. Proposition \ref{prop:continuity}, together with Lemma \ref{lem:isometry1}, imply that $v_{\infty,i}|_{\mathbb{R}^N\smallsetminus Y}$ is continuous. Consequently, $\Theta_i:=\{x\in\mathbb{R}^{N}\smallsetminus Y:v_{\infty,i}(x)>0\}$ is $\Gamma$-invariant and open in $\mathbb{R}^N$. Since $v_{\infty,i}\neq 0$ and $v_{\infty,i}v_{\infty,j}=0$ if $i\neq j$, we have that $\{\Theta_1,\ldots,\Theta_M\}\in\mathcal{P}_M^\Gamma$. As we have already noticed (see the proof of (ii) of Theorem \ref{thm:partition}), $Y$ has capacity 0 in $\rn$. Hence, from \eqref{eq:limit} and \eqref{eq:comparison} we get that $v_{\infty,i}|_{\Theta_i}\in\mathcal{M}_{\Theta_i}^\Gamma$ and
\begin{equation*}
\sum_{i=1}^M c_{\Theta_i}^\Gamma\leq\frac{1}{N}\sum_{i=1}^M\|v_{\infty,i}\|^2 = \lim_{k\to\infty} c_k^\Gamma \leq \inf_{(\Phi_1,\ldots,\Phi_M)\in\mathcal{P}_M^\Gamma}\;\sum_{i=1}^M c_{\Phi_i}^\Gamma.
\end{equation*}
This shows that $\{\Theta_1,\ldots,\Theta_M\}$ solves the optimal $M$-partition problem \eqref{eq:4a}. 
 
Reordering this partition as indicated in Theorem \ref{thm:partition}, and setting $\Omega_1:=\Theta_1\cup Y_1$, $\Omega_M:=\Theta_M\cup Y_2$ and $\Omega_i:=\Theta_i$ if $i\neq 1,M$, we have that $\{\Omega_1,\ldots,\Omega_M\}\in\mathcal{P}_M^\Gamma$ and $c_{\Omega_i}^\Gamma=c_{\Theta_i}^\Gamma$. As $v_{\infty,i}|_{\Omega_i}\in\mathcal{M}_{\Omega_i}^\Gamma$ and $J_{\Omega_i}(v_{\infty,i}|_{\Omega_i}) = c_{\Omega_i}^\Gamma$, the function $v_{\infty,i}|_{\Omega_i}$ solves problem \eqref{eq:3a} in $\Omega_i$. Since $\Omega_i$ is smooth by Theorem \ref{thm:partition}, we have that $v_{\infty,i}$ is continuous in $\mathbb{R}^N$ and $\Omega_i=\{x\in\mathbb{R}^{N}:v_{\infty,i}(x)>0\}$. This concludes the proof. 
\end{proof} \medskip

\begin{proof}[Proof of Theorem \ref{thm:main_RN}]
This result follows immediately from Theorems \ref{thm:partition} and \ref{thm:phase_separation}.
\end{proof}

The following result rephrases Corollary \ref{cor:main}.

\begin{corollary} \label{cor:main_RN}
\begin{itemize}
\item[$(i)$]There exists a solution to the optimal $M$-partition problem \eqref{eq:4a} in $\mathbb{R}^N$.
\item[$(ii)$]There exists a least energy $\Gamma$-invariant sign-changing solution to the problem \eqref{eq:1a} with precisely $M$ nodal domains.
\end{itemize}
\end{corollary}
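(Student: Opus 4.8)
The plan is to obtain the corollary as a direct consequence of Theorem~\ref{thm:main_RN} together with the existence theorem for the competitive system proved in \cite{cs}. First I would fix an arbitrary admissible sequence of coupling parameters, for instance $\lambda_{ij,k}:=-k$ for all $i\neq j$, so that $\lambda_{ij,k}\to-\infty$ as $k\to\infty$. For each $k$, \cite[Theorem 1.2]{cs} (recalled at the end of Section~\ref{sec:euclidean}) guarantees that $\inf_{\mathcal{N}^\Gamma_k}\mathcal{J}_k$ is attained at a fully nontrivial $\Gamma$-invariant solution $v_k=(v_{k,1},\ldots,v_{k,M})$ of the system \eqref{eq:2a} with $\lambda_{ij}=\lambda_{ij,k}$, whose components may be taken positive. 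This is precisely a positive least energy fully nontrivial $\Gamma$-invariant solution, i.e.\ the input required by Theorem~\ref{thm:main_RN}.

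Next I would apply Theorem~\ref{thm:main_RN} to the sequence $(v_k)$. Passing to the subsequence provided by that theorem, part~$(b)$ yields a partition $\{\Omega_1,\ldots,\Omega_M\}\in\mathcal{P}^\Gamma_M$ that solves the optimal $M$-partition problem \eqref{eq:4a}; this is exactly assertion~$(i)$. For assertion~$(ii)$, part~$(c_3)$ states that the function $v:=\sum_{i=1}^M(-1)^{i-1}v_{\infty,i}$, built from the strong $D^{1,2}(\mathbb{R}^N)$-limits $v_{\infty,i}$ of the components, is a $\Gamma$-invariant sign-changing solution of \eqref{eq:1a} with precisely $M$ nodal domains which has least energy among all such solutions. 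This is exactly assertion~$(ii)$.

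I expect no genuine obstacle at this stage: the corollary is simply a packaging of Theorem~\ref{thm:main_RN} with the existence result of \cite{cs}, and all of the analytic difficulty (the strong convergence of the components, the continuity and regularity of the limit profiles via Proposition~\ref{prop:continuity}, the capacity-zero argument that restores the two exceptional orbits, and the minimality of the energy) has already been carried out in Theorems~\ref{thm:partition} and \ref{thm:phase_separation}. The only points worth recording explicitly are that the solutions furnished by \cite{cs} satisfy the positivity and least-energy hypotheses of Theorem~\ref{thm:main_RN} and that an admissible sequence with $\lambda_{ij,k}\to-\infty$ exists, both of which are immediate.
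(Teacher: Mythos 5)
Your proposal is correct and follows exactly the route the paper itself takes: it deduces both assertions by feeding the positive least energy fully nontrivial solutions guaranteed by \cite[Theorem 1.2]{cs} into Theorem~\ref{thm:main_RN}, reading off $(i)$ from part~$(b)$ and $(ii)$ from part~$(c_3)$. The paper's proof is just a two-line version of this argument; your explicit choice of the sequence $\lambda_{ij,k}=-k$ and the verification of the hypotheses merely make the same reasoning more detailed.
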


\begin{proof}
The existence of a positive least energy fully nontrivial solution to the system \eqref{eq:2a} was established in \cite{cs}. So these statements follow from Theorem~\ref{thm:main_RN}. 
\end{proof}

We conclude with the following result which, together with Theorem \ref{thm:partition}, establishes a close relationship between solutions to the optimal $M$-partition problem \eqref{eq:4a} and least energy $\Gamma$-invariant sign-changing solutions to the problem \eqref{eq:1a} with precisely $M$ nodal domains.

\begin{corollary} \label{cor:nodal}
If $v\in D^{1,2}(\mathbb{R}^N)$ is a $\Gamma$-invariant sign-changing solution to the problem \eqref{eq:1a} with precisely $M$ nodal domains and $v$ has minimal energy among all such solutions, then its nodal domains $\{\Omega_1,\ldots,\Omega_M\}$ satisfy the optimal $M$-partition problem \eqref{eq:4a} in $\mathbb{R}^N$.
\end{corollary}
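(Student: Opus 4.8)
The plan is to sandwich the energy $J(v)$ between the optimal‑partition value $\inf_{\mathcal{P}_M^\Gamma}\sum_i c_{\Phi_i}^\Gamma$ approached from two sides, and conclude that the nodal domains realize this infimum. The first thing I would verify is that the nodal domains $\{\Omega_1,\dots,\Omega_M\}$ form an admissible partition, i.e. that they lie in $\mathcal{P}_M^\Gamma$. By Proposition~\ref{prop:continuity} together with Lemma~\ref{lem:isometry1}, the $\Gamma$‑invariant function $v$ is continuous on $\mathbb{R}^N\smallsetminus Y$ and factors as $v=\bar v\circ\widetilde q$ with $\bar v$ continuous on $(0,\pi)$. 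Hence $\{\bar v\neq 0\}$ is an open subset of $(0,\pi)$, a union of maximal intervals $I_1,\dots,I_M$ (exactly $M$ of them, since the fibers $\widetilde q^{\,-1}(t)$ are connected for $t\in(0,\pi)$ and $v$ has precisely $M$ nodal domains), and each nodal domain is $\widetilde q^{\,-1}(I_j)$, possibly enlarged by the exceptional orbit $Y_1$ or $Y_2$ at an endpoint. Being preimages under the $\Gamma$‑orbit map, these sets are $\Gamma$‑invariant, open and connected, so $\{\Omega_1,\dots,\Omega_M\}\in\mathcal{P}_M^\Gamma$; since $Y=Y_1\cup Y_2$ has capacity (hence measure) zero, it is irrelevant whether these orbits are attached, exactly as in the proof of Theorem~\ref{thm:partition}(ii).

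Next I would establish the lower bound. On each $\Omega_i$ the restriction $v|_{\Omega_i}$ is a nontrivial function in $D_0^{1,2}(\Omega_i)^\Gamma$ satisfying $J_{\Omega_i}'(v|_{\Omega_i})\,v|_{\Omega_i}=0$ (multiply \eqref{eq:1a} by $v|_{\Omega_i}$ and integrate over the nodal domain, on whose boundary $v$ vanishes), so $v|_{\Omega_i}\in\mathcal{M}_{\Omega_i}^\Gamma$ and therefore $J_{\Omega_i}(v|_{\Omega_i})\ge c_{\Omega_i}^\Gamma$. Because the $\Omega_i$ are pairwise disjoint, exhaust $\{v\neq 0\}$, and $\nabla v=0$ a.e. on $\{v=0\}$, the energy splits as $J(v)=\sum_{i=1}^M J_{\Omega_i}(v|_{\Omega_i})$. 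This yields $J(v)\ge\sum_i c_{\Omega_i}^\Gamma\ge\inf_{\mathcal{P}_M^\Gamma}\sum_i c_{\Phi_i}^\Gamma$.

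For the reverse inequality I would invoke the already established existence (Corollary~\ref{cor:main_RN}) of a solution $\{\Omega_1^*,\dots,\Omega_M^*\}$ to the optimal $M$‑partition problem \eqref{eq:4a}. By Theorem~\ref{thm:partition}(iii), the associated function $w=\sum_i(-1)^{i-1}w_i$ is a $\Gamma$‑invariant sign‑changing $M$‑nodal solution to \eqref{eq:1a} with $J(w)=\sum_i c_{\Omega_i^*}^\Gamma=\inf_{\mathcal{P}_M^\Gamma}\sum_i c_{\Phi_i}^\Gamma$, and $w$ has minimal energy among all such solutions. Since $v$ is least‑energy among $M$‑nodal solutions by hypothesis, $J(v)\le J(w)=\inf_{\mathcal{P}_M^\Gamma}\sum_i c_{\Phi_i}^\Gamma$. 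Combining this with the lower bound forces every inequality to be an equality; in particular $\sum_i c_{\Omega_i}^\Gamma=\inf_{\mathcal{P}_M^\Gamma}\sum_i c_{\Phi_i}^\Gamma$, which is precisely the assertion that $\{\Omega_1,\dots,\Omega_M\}$ solves \eqref{eq:4a}.

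The one genuinely delicate point is the admissibility step: showing that the nodal domains are $\Gamma$‑invariant and open. This is exactly where the one‑dimensionality of the orbit space (Proposition~\ref{prop:continuity}) is indispensable — for a general group action a single nodal domain need not be invariant — together with the observation that the exceptional orbits $Y_1,Y_2$ carry zero capacity, so they may be freely absorbed into the adjacent domains without changing any of the Sobolev spaces or energies. Once this is in place, the remainder is a short two‑sided comparison resting on Theorem~\ref{thm:partition}(iii) and the energy splitting.
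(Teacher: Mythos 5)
Your proposal is correct and follows essentially the same route as the paper: both compare $J(v)$ against the energy $J(w)=\sum_i c_{\Theta_i}^\Gamma$ of the $M$-nodal solution produced from an optimal partition via Corollary~\ref{cor:main_RN} and Theorem~\ref{thm:partition}$(iii)$, and use the splitting bound $J(v)\geq\sum_i c_{\Omega_i}^\Gamma$ together with the minimality of $v$ (the paper phrases this as a contradiction, you as a two-sided sandwich, which is immaterial). Your explicit verification that the nodal domains are admissible, i.e.\ $\Gamma$-invariant and open, is a point the paper leaves implicit, and it is a welcome addition (note only that invariance of each nodal domain already follows from the connectedness of the $\Gamma$-orbits, since $m,n\geq 2$, without appealing to Proposition~\ref{prop:continuity}).
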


\begin{proof}
By Corollary \ref{cor:main_RN}, there exists a solution $\{\Theta_1,\ldots,\Theta_M\}$ to the optimal $M$-partition problem \eqref{eq:4a} and, by Theorem \ref{thm:partition} there exists a $\Gamma$-invariant sign-changing solution $w$ to \eqref{eq:1a}, with precisely $M$ nodal domains, such that
$$J(w)= \sum_{i=1}^M c_{\Theta_i}^\Gamma.$$
Now, we argue by contradiction. Let $v$ be a least energy $\Gamma$-invariant sign-changing solution to \eqref{eq:1a} with precisely $M$ nodal domains. If the set of its nodal domains $\{\Omega_1,\ldots,\Omega_M\}$ were not a solution to the optimal $M$-partition problem \eqref{eq:4a}, we would have that 
$$J(w)= \sum_{i=1}^M c_{\Theta_i}^\Gamma=\inf_{\{\Lambda_1,\ldots,\Lambda_M\}\in\mathcal{P}_M^\Gamma}\;\sum_{i=1}^M c_{\Lambda_i}^\Gamma<\sum_{i=1}^M c_{\Omega_i}^\Gamma\leq J(v).$$
This is a contradiction. 
\end{proof}

\begin{remark}
The argument used to prove $(i)$ in \emph{Theorem }\ref{thm:partition} shows that the expression \eqref{eq:4a} is increasing in $M$. So \emph{Corollary} \ref{cor:nodal} implies that, if a $\Gamma$-invariant $M$-nodal solution to \eqref{eq:1a} has minimal energy among all $\Gamma$-invariant $M$-nodal solutions, it has also minimal energy among all $\Gamma$-invariant solutions with at least $M$ nodal domains.
\end{remark}

\bigskip

\begin{flushleft}
\textbf{Mónica Clapp}\\
Instituto de Matemáticas\\
Universidad Nacional Autónoma de México\\
Circuito Exterior, Ciudad Universitaria\\
04510 Coyoacán, Ciudad de México, Mexico\\
\texttt{monica.clapp@im.unam.mx} 
\medskip

\textbf{Alberto Saldaña}\\
Instituto de Matemáticas\\
Universidad Nacional Autónoma de México\\
Circuito Exterior, Ciudad Universitaria\\
04510 Coyoacán, Ciudad de México, Mexico\\
\texttt{alberto.saldana@im.unam.mx} 
\medskip

\textbf{Andrzej Szulkin}\\
Department of Mathematics\\
Stockholm University\\
106 91 Stockholm, Sweden\\
\texttt{andrzejs@math.su.se} 
\end{flushleft}

\end{document}